\newtheorem{theorem}{Theorem}[section]
\newtheorem{lem}[theorem]{Lemma}
\newtheorem{prop}[theorem]{Proposition}
\theoremstyle{definition}
\theoremstyle{remark}
\numberwithin{equation}{section}
\newcommand{\ra}{\rightarrow}
\newcommand{\HH}{{\mathbb{H}}}
\newcommand{\HP}{{\mathbb{HP}}}
\newcommand{\ii}{\mathbf{i}}\newcommand{\jj}{\mathbf{j}}\newcommand{\kk}{\mathbf{k}}
\newcommand{\Ad}{\text{Ad}}
\newcommand{\lb}{\left\langle}
\newcommand{\rb}{\right\rangle}
\newcommand{\mh}{\mathfrak{h}}
\newcommand{\mk}{\mathfrak{k}}
\newcommand{\mpp}{\mathfrak{p}}
\newcommand{\R}{\mathbb{R}}
\newcommand{\hx}{\hat{X}}
\newcommand{\hy}{\hat{Y}}
\newcommand{\hz}{\hat{Z}}
\newcommand{\ha}{\hat{\alpha}}
\newcommand{\mC}{\mathcal{C}}
\newcommand{\mF}{\mathcal{F}}
\newcommand{\mQ}{\mathcal{Q}}
\newcommand{\tU}{\mathcal{U}}
\begin{document}

\title[Radially symmetric connections]{Radially symmetric connections over round spheres}

% author one information
\author{Kristopher Tapp}
\address{Department of Mathematics\\ Saint Joseph's University, 5600 City Ave.\\ Philadelphia, PA 19131}
\email{ktapp@sju.edu}
\subjclass{53C}
%\keywords{Connection metric, Parallel connection, Nonnegative curvature}

\begin{abstract}
We classify the radially symmetric connections in vector bundles over round spheres by proving that they are all parallel.
\end{abstract}
\maketitle
\section{Introduction}
Suppose that $B$ is a compact Riemannian manifold and $E$ is the total space of a rank $k$ vector bundle over $B$ endowed with a Euclidean structure (a smoothly varying inner product on the fibers) and a compatible connection $\nabla$.  This connection is called \emph{parallel} if its curvature tensor, $R^\nabla$, has zero covariant derivative, that is, $(D_ZR^\nabla)(X,Y)V=0$ for all $p\in B$, $X,Y,Z\in T_p B$ and $V\in E_p$ = the fiber over $p$.  It is called \emph{radially symmetric} if the $Z=X$ case of this hypothesis is satisfied; that is, $(D_XR^\nabla)(X,Y)V=0$ for all $p\in B$, $X,Y\in T_p B$ and $V\in E_p$.  Notice that both properties depend on the metric of $B$.

Strake and Walshap proved in~\cite{Wal} that when $B$ has positive sectional curvature, radial symmetry is a sufficient condition for $\nabla$ to induce a connection metric with nonnegative sectional curvature on $E$.  Other conditions have appeared in the literature, some necessary and some sufficient, for $\nabla$ to induce a connection metric with nonnegative curvature on $E$ (or of positive or nonnegative curvature on the unit sphere bundle).  All of these conditions boil down to a bound on the norm of $(D_XR^\nabla)(X,Y)V$; see for example~\cite{CDR},\cite{Wal},\cite{T}, and~\cite{Yang}.  This motivates a study of connections satisfying such bounds, and a natural starting point is the radial symmetry condition.

Partly motivated by the application to nonnegative curvature, Guijarro, Lorenzo and Walshap classified the \emph{parallel} connections in vector bundles over irreducible simply connected symmetric spaces: every one is an associated vector bundle of a canonical principal bundle with the connection inherited from the principal bundle~\cite{GSW}.

For rank $2$ vector bundles, parallel is equivalent to radial symmetry (even locally), as observed in~\cite[Remark 6.3]{STT}.  For higher rank, nothing is known about the gap between these two hypotheses.  For vector bundles over the round sphere, we will prove that a condition even weaker than radial symmetry implies parallel:
\begin{theorem}[Main Theorem] Suppose that $E$ is the total space of a vector bundle over $(S^n,\text{round})$, endowed with a Euclidean structure and a compatible connection $\nabla$.  Let $p_0\in S^n$ and let $\ha$ denote the gradient of the distance function to $p_0$, regarded as a unit vector field on $\tU=S^n-\{\pm p_0\}$.  If $(D_{\ha}R^\nabla)(\ha,X)V=0$ for all $p\in\tU$, $X\in T_p S^n$ and $V\in E_p$, then $\nabla$ is parallel.
\end{theorem}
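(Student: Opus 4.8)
The plan is to convert the pointwise hypothesis into an integrated statement along the radial geodesics, and then to use the second Bianchi identity together with the refocusing of those geodesics at the antipode $-p_0$ to propagate the resulting information into the non-radial directions.

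First I would fix a unit-speed radial geodesic $\gamma$ emanating from $p_0$ (an integral curve of $\ha$, so $\gamma'=\ha$ along $\gamma$), choose a Levi-Civita--parallel orthonormal frame $e_1=\gamma',e_2,\dots,e_n$ of $TS^n$ along $\gamma$, and a $\nabla$-parallel orthonormal frame $V_1,\dots,V_k$ of $E$ along $\gamma$. Since $\gamma',e_j,V_a,V_b$ are all parallel, $\langle (D_{\ha}\RN)(\ha,e_j)V_b,V_a\rangle=\tfrac{d}{dt}\langle \RN(\gamma',e_j)V_b,V_a\rangle$, so the hypothesis says exactly that each endomorphism field $t\mapsto\RN(\gamma'(t),e_j(t))$ is $\nabla$-parallel. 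Writing $P_t$ for $\nabla$-parallel transport along $\gamma$ and $Q_t$ for Levi-Civita transport, and noting $\gamma'(t)=Q_t\gamma'(0)$, this reads $P_t^{-1}\,\RN_{\gamma(t)}(\gamma'(t),Q_t w)\,P_t=\RN_{p_0}(\gamma'(0),w)$ for every $w\in T_{p_0}S^n$. This is the full content of the hypothesis; the entire difficulty is that it constrains only those curvature pairings in which one slot is radial.

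Next I would attack the angular directions through the second Bianchi identity $(D_{\ha}\RN)(X,Y)+(D_X\RN)(Y,\ha)+(D_Y\RN)(\ha,X)=0$, which rewrites the radial derivative $(D_{\ha}\RN)(X,Y)$ for angular $X,Y$ in terms of the angular derivatives $(D_X\RN)(\ha,Y)$ of the radial-slot curvature controlled by the previous step. To evaluate those angular derivatives I would differentiate the integrated identity across the family of radial geodesics, i.e. in the $S^{n-1}$ of initial directions at $p_0$: the derivative of $P_t$ with respect to the geodesic's initial direction is itself governed by $\RN$ (a holonomy/Jacobi-type equation), while the second fundamental form $\cot(r)$ of the distance spheres enters through the fact that $\nabla_X\ha$ is a multiple of $X$ for $X\perp\ha$. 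Organizing these pieces along each geodesic should produce a linear first-order ODE in $r$ for the unknown angular components of $D\RN$, whose coefficient is singular ($\sim\cot r$) at both endpoints $r\to0$ and $r\to\pi$.

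The main obstacle is precisely this propagation into angular directions, and here the special geometry of the round sphere is essential: every radial geodesic from $p_0$ refocuses at the antipode $-p_0$, where $\RN$ is again a single smooth tensor (as it is at $p_0$). I would use smoothness of $D\RN$ at both poles — where the $\cot r$ coefficient blows up — as two-point boundary conditions forcing the homogeneous (blow-up) part of the solution to vanish, and I would use the requirement that all initial directions $v\in S^{n-1}$ yield the same tensor $\RN$ at $-p_0$ after conjugation by their respective ($v$-dependent) holonomies to eliminate the inhomogeneous part. This should give $(D_W\RN)=0$ for angular $W$; feeding this back through the Bianchi relation then forces $(D_{\ha}\RN)(X,Y)=0$ for all $X,Y$, and hence $D\RN\equiv0$, i.e. $\nabla$ is parallel. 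I expect the genuinely hard part to be the rigorous two-point boundary analysis: showing that a solution of the singular ODE which stays bounded at both $r=0$ and $r=\pi$ must vanish identically.
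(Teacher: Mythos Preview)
Your strategy is essentially the paper's strategy: reduce the hypothesis to a radial ODE with the shape-operator coefficient $\tan\alpha$ (your $\cot r$), use smoothness at \emph{both} poles as two-point boundary conditions, and extract algebraic identities from the fact that something bounded is multiplied by something that blows up at $\alpha\to\pm\pi/2$. The paper organises this a bit differently. Rather than working invariantly with $\RN$ and parallel transport, it fixes the radial gauge (your $P_t$-trivialisation), introduces Killing fields $\hx$ from a transitive $H$-action on the equator, and first solves for the \emph{connection form} itself: the hypothesis becomes the second-order ODE $\mF''=-(\tan\alpha)\mF'$ for $\omega(\hx)=\mF(X)$, with the pole conditions forcing $\mF_p(X)=-\tfrac12(\sin\alpha+1)\,\mQ_a(X)$ where $\mQ$ is the logarithmic derivative of the clutching map. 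Everything downstream---$\Omega$ and all components of $\Theta=(D\RN)$---is then an explicit trigonometric function of $\alpha$ times a combination of $\mQ$ and its equatorial derivatives, and the paper reads off the identities that make each component vanish by the same boundedness-at-poles trick. Your route via the second Bianchi identity (which the paper does not invoke) is a genuine simplification for the mixed component $\Theta(\ha,\hx,\hy)$; the paper instead computes that component directly.

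One caution: your sketch is vaguest exactly where the paper does the most work. The fully angular component $(D_Z\RN)(X,Y)$ for $X,Y,Z\perp\ha$ is \emph{not} obtained by differentiating your integrated identity for $\RN(\ha,\cdot)$ once across the family, nor does it follow from Bianchi once you know the mixed components vanish. In the paper this case requires a separate explicit computation and a second pole-boundedness argument, producing an additional algebraic constraint on $\mQ$ (namely $[[\mQ(X),\mQ(Y)],\mQ(Z)]=4\langle Z,X\rangle\mQ(Y)-4\langle Z,Y\rangle\mQ(X)$) beyond the one coming from $(D_X\RN)(\ha,Y)$. Your ``linear first-order ODE for the unknown angular components of $D\RN$'' will have to contend with this; expect that closing the system forces you either to iterate the family-differentiation or, equivalently, to recover the connection form first as the paper does.
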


That is, if at each point $\nabla$ satisfies the radial symmetry condition in the ``longitude'' direction, then it must be parallel.

The remainder of the paper is organized as follows.  In Section 2, we present a geometric method for constructing the clutching map, trivializing the bundle locally, and describing the connection form with respect to this trivialization.  This description is nonstandard, but in Section 3 we show how natural it looks for known examples of parallel connections.  In Section 4, we derive curvature formulas for a connection described in this manner.  In Section 5, we prove the main theorem as follows.  The radial symmetry hypothesis rigidly determines the connection form in terms of the clutching map along each longitude geodesic from $p_0$ to $-p_0$.  For the connection to be smooth at $\pm p_0$, these determiniations along individual longitudes must match up in a way that places geometric restrictions on the clutching map, from which the theorem is proven.

%%----------------------------------------
\section{Set Up}
Throughout this paper, $E$ will denote the total space of a vector bundle over $(S^n,\text{round})$, endowed with a Euclidean structure and a compatible connection $\nabla$.  In this section, we exhibit an advantageous method of describing $\nabla$ in local coordinates.

First we will express an arbitrary element $p\in S^n$ as
$$p=((\cos\alpha)a,\sin\alpha)\in\R^n\times\R\cong\R^{n+1},$$
where $\alpha\in[-\pi/2,\pi/2]$ and $a\in S^{n-1}\subset\R^n$.  We will sometimes identify $a$ with the element $(a,0)$ of the ``equatorial'' $S^{n-1}$ in $S^n$.  This description of $S^n$ distinguishes the axis between $p^-=(0,-1)$ and $p^+=(0,1)$, which is an arbitrary antipodal pair, since we are free to chose the coordinates of $\R^{n+1}$.  Notice that $S^n$ is covered by the following pair of contractible subsets:
$$U^+=S^n-\{p^-\}\,\text{ and }\,U^-=S^n-\{p^+\},$$
named so that $p^\pm$ is the center of $U^\pm$.  Define $\tU=U^+\cap U^-$.
We will sometimes consider $a:\tU\ra  S^n$ to be the function sending $((\cos\alpha)a,\sin\alpha)\mapsto a$ and consider $\alpha:S^n\ra[-\pi/2,\pi/2]$ to be the function that sends $((\cos\alpha)a,\sin\alpha)\mapsto\alpha$.

There is a natural geometric way to trivialize the bundle over $U^\pm$.  First choose arbitrary orthogonal identifications $\R^k\cong E_{p^\pm}$.  Then for each $p\in U^\pm$ and each $v\in\R^k\cong E_{p^\pm}$, let $(p,v)^\pm\in E_p$ denote parallel transport of $v$ along the unique minimizing geodesic from $p^\pm$ to $p$.  The resulting diffeomorphism $U^\pm\times\R^k\ra E$, that sends $(p,v)\mapsto (p,v)^\pm$, is a trivialization of the bundle over $U^\pm$.  The whole bundle can therefore be described as:
$$E\cong\left(\left(U^+\times\R^k\right)\sqcup\left(U^-\times\R^k\right)\right)/\sim$$
where ``$\sim$'' denotes the equivalence relation on the disjoint union defined so that for all $p=((\cos\alpha)a,\sin\alpha)\in \tU$ and all $v\in\R^k$ we have:
\begin{equation}\label{E:clutch}(p,v)^+\sim\left(p,\mC(a)\cdot v\right)^-.\end{equation}
Here $\mC:S^{n-1}\ra SO(k)$ is the \emph{clutching map}, defined so that for $a\in S^{n-1}$, left multiplication by $\mC(a)$ equals the orthogonal transformation from $\R^k\cong E_{p^+}$ to $\R^k\cong E_{p^-}$ obtained by parallel transport along the unique minimizing geodesic from $p^+$ to $p^-$ that passes through $(a,0)$.  The homotopy type of $\mC$ determines the isomorphism class of the vector bundle, while the geometry of $\mC$ encodes information about $\nabla$.

Fix $a_0\in S^{n-1}$, and henceforth assume that the above-mentioned identifications $\R^k\cong E_{p^-}$ and $\R^k\cong E_{p^+}$ are chosen to be related by: $\mC(a_0)=I$ (the identity).  This ensures that for every $a\in S^{n-1}$, $\mC(a)$ equals the parallel transport around the loop $\beta_a$ in $S^n$ that first follows the geodesic segment from $p^+$ to $p^-$ passing through $a$ and then follows the geodesic segment from $p^-$ to $p^+$ passing through $a_0$.  In summary:
\begin{equation}\label{PE}\mC(a) = P_{\beta_a}\text{  = the parallel transport along $\beta_a$}.\end{equation}

For each $v\in\R^k$, let $v^\pm$ denote the corresponding ``constant'' section over $U^\pm$, defined as $v^\pm(p) = (p,v)^\pm$.  The bundle trivializations determine how a $k$-by-$k$ matrix $M$ acts from the left on a section over $U^\pm$, namely:
$$M\cdot v^\pm = (M\cdot v)^\pm\quad\text{ for all }v\in\R^k,$$
with $v$ considered as a column matrix.  But this multiplication is only individually defined over $U^+$ and $U^-$; it does not determine a well-defined operation on all of $E$.

The connection $\nabla$ is represented over $U^\pm$ by a $1$-form, $\omega^\pm$, on $U^\pm$ with values in $ so(k)$, defined so that for every vector field $X$ on $U^\pm$ and every $v\in\R^k$,
\begin{equation}\label{Eq:condef}\nabla_X(v^\pm) =\omega^\pm(X)\cdot v^\pm.\end{equation}
The following Liebniz Rule holds:
\begin{lem}\label{L:lieb} If $X$ is a tangent field on $U^\pm$, $M$ is a smooth function on $U^\pm$ with values in the space of $k$-by-$k$ real matrices, and $v\in\R^k$, then
$$\nabla_X(M\cdot v^\pm) =\left(X(M) + \omega^\pm(X)\cdot M\right)\cdot v^\pm.$$
\end{lem}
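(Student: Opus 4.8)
The plan is to reduce everything to the defining equation~\eqref{Eq:condef} by expanding in the constant frame and invoking only the fundamental Leibniz property of a connection, namely $\nabla_X(f\cdot s)=X(f)\,s+f\,\nabla_X s$ for a smooth function $f$ and a section $s$.

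First I would fix the standard basis $e_1,\dots,e_k$ of $\R^k$, with associated constant sections $e_1^\pm,\dots,e_k^\pm$; by construction of the trivialization these form a frame for $E$ over $U^\pm$. Writing $M=(M_{ji})$ for the entries of $M$, the section $M\cdot v^\pm$ is by definition the one whose value at $p$ is $(p,M(p)v)^\pm$, and expanding $M(p)v$ in the basis gives the pointwise identity
$$M\cdot v^\pm=\sum_{j,i}M_{ji}\,v_i\,e_j^\pm,$$
where the $v_i$ are constants and the $M_{ji}$ are smooth functions on $U^\pm$.

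Next I would apply $\nabla_X$ term by term. The Leibniz property yields $\nabla_X(M_{ji}v_i\,e_j^\pm)=v_i\,X(M_{ji})\,e_j^\pm+M_{ji}v_i\,\nabla_X(e_j^\pm)$, and \eqref{Eq:condef} replaces $\nabla_X(e_j^\pm)$ by $\omega^\pm(X)\cdot e_j^\pm$. Summing over $i,j$ splits the result into two pieces. The first, $\sum_{j,i}v_i\,X(M_{ji})\,e_j^\pm$, reassembles (using that $X(M)$ is the matrix with entries $X(M_{ji})$) into $(X(M)\cdot v)^\pm=X(M)\cdot v^\pm$. For the second piece I would collect the $i$-sum first, writing $\sum_i M_{ji}v_i=(Mv)_j$, so that at each point the term becomes $\omega^\pm(X)\bigl(\sum_j (Mv)_j\,e_j\bigr)=\omega^\pm(X)\,M\,v$ in the fiber; hence this piece equals $\bigl(\omega^\pm(X)\cdot M\cdot v\bigr)^\pm=\bigl(\omega^\pm(X)\cdot M\bigr)\cdot v^\pm$. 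Adding the two pieces gives the claimed formula.

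The computation is essentially bookkeeping, so I do not expect a genuine obstacle; the one place to be careful is the order of matrix multiplication in the second term, where the fact that $\omega^\pm(X)$ acts on the fiber \emph{after} $M$ is what produces the product $\omega^\pm(X)\cdot M$ rather than $M\cdot\omega^\pm(X)$. Tracking this correctly hinges on interpreting $M\cdot v^\pm$ as the section $p\mapsto(p,M(p)v)^\pm$ together with the rule $N\cdot w^\pm=(Nw)^\pm$ for constant matrices, both supplied in the set-up.
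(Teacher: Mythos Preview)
Your proof is correct and follows essentially the same approach as the paper: both expand in the constant frame $e_1^\pm,\dots,e_k^\pm$, apply the Leibniz rule together with the defining relation~\eqref{Eq:condef}, and reassemble the two resulting sums into $X(M)\cdot v^\pm$ and $\omega^\pm(X)\cdot M\cdot v^\pm$. The only cosmetic difference is that the paper first reduces by linearity to the case $v=e_s$, whereas you carry the coefficients $v_i$ through the computation.
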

\begin{proof} Let $\{e_1,...,e_k\}$ denote the standard orthonormal basis of $\R^k$.  It will suffice to prove the statement when $v=e_s$ for arbitrary fixed $s\in\{1,...,k\}$.  Using subscripts to denote the components of vectors and matrices, we have:
\begin{align*}
\nabla_X(M\cdot e_s^\pm)
   = \nabla_X\left(\sum_i M_{is}\,e_i^+\right)
  & = \sum_{i}X\left(M_{is}\right)\,e_i^\pm + \sum_{i} M_{is}\,\omega^\pm(X)\cdot e_i^\pm \\
  & = \sum_{i}X\left(M_{is}\right)\,e_i^\pm + \sum_{i,j} M_{is}(\omega^\pm(X))_{ji}\,e_j^\pm \\
  & = X(M)\cdot e_s^\pm + \omega^\pm(X)\cdot M\cdot e_s^\pm.
\end{align*}
\end{proof}

We will next describe $\omega^\pm$ in terms of its action on certain Killing fields.  To describe these Killing fields, let $H$ be a Lie group with a transitive left action on the equatorial $S^{n-1}$, let $K$ denotes the isotropy group at $a_0$, and let $\pi:H\ra H/K\cong S^{n-1}$ denote the quotient map.  Let $\mk\subset\mh$ denote the Lie algebras and let $\mathfrak{p}$ denote the orthogonal compliment of $\mk$ in $\mh$, which is naturally identified with $T_{a_0}S^{n-1}$.  If $a\in S^{n-1}$, we denote $\mk_a=\Ad_{g^{-1}}(\mk)$ and $\mpp_a=\Ad_{g^{-1}}(\mpp)$, which does not depend on the choice of $g\in\pi^{-1}(a)$.  Notice that $\mh=\mk_a\oplus\mpp_a$, and $\mpp_a$ is naturally identified with $T_a S^{n-1}$.

Let $\hat\alpha$ denote the gradient of $\alpha$, considered as a unit-length vector field on $\tU$.  For each $X\in\mh$, let $X_R$ denote the associated right-invariant field on $H$, let $\tilde{X}$ denote the associated Killing field on $S^{n-1}$, and let $\hat{X}$ denote the associated Killing field on $S^n$ coming from the left action of $H$ on $S^n$ with fixed points $\{p^+,p^-\}$.

To be more explicit, for $\alpha\in(-\pi/2,\pi/2)$ and $g\in H$, we set $a=\pi(g)\in S^{n-1}$ and $p=((\cos\alpha)a,\sin\alpha)\in S^n$ and observe that:
$$\hat\alpha(p)=\left(-(\sin\alpha)a,\cos\alpha\right),\quad X_R(g) = X\cdot g,\quad
  \tilde{X}(a)=\pi_*(X_R(g)),\quad\hat{X}(p) = ((\cos\alpha)\tilde{X}(a),0).$$
Notice that $\omega^\pm(\ha)=0$ because we trivialized the bundle via parallel transport along longitude geodesics.  Therefore $\omega^\pm$ can be fully described in terms of the smooth function that assigns to each $p\in U^\pm$ the linear map $\mF^\pm_p:\mh\ra so(k)$ defined as:
\begin{equation}\label{E:dud}\mF^\pm_p(X) = \omega^\pm_p(\hat{X}).\end{equation}
Notice that $\mF^\pm_p$ vanishes on $\mk_a$, and is therefore determined by its values on $\mpp_a$.

The function $\mF^\pm$ was defined by Equations~\ref{Eq:condef} and~\ref{E:dud}, but it could alternatively be described purely in terms of parallel transport.  If $p=((\cos\alpha)a,\sin\alpha)\in\tU$ and $X\in\mpp_a$, we can compute $\mF^\pm_p(X)$ by considering the loop $\gamma_t$ that traverses the minimal geodesic from $p^\pm$ to $p$, followed by the flow for time $t$ along $\hx$, followed by the minimal geodesic back to $p^\pm$.  It is straightforward to see that:
\begin{equation}\label{beta}
\mF^\pm_p(X) = -\frac{d}{dt}\Big|_{t=0} P_{\gamma_t}.\end{equation}

We next define $\mQ$ as the function that associates to each $a\in S^{n-1}$ the linear map $\mQ_a:\mh\ra so(k)$ defined so that for all $X\in\mh$:
$$\mQ_a(X) = d\mC_{a}(\tilde{X}(a))\cdot \mC(a)^{-1}.$$
Notice that $\mQ_a$ vanishes on $\mk_a$ and is therefore determined by its values on $\mpp_a$.  The continuity of the connection at $p^+$ (the point that's missing from $U^-$) implies the following asymptotic behavior of $\mF^-$:

\begin{prop}\label{Mags}Fix $a\in S^{n-1}$ and define $\gamma:\left(-\frac{\pi}{2},\frac{\pi}{2}\right)\ra S^n$ as $\gamma(t) = \left((\cos t)a,\sin t\right)$.  For any $X\in \mh$,
$$\lim_{t\ra\frac{\pi}{2}} \mF^-_{\gamma(t)}(X) = -\mQ_a(X).$$
\end{prop}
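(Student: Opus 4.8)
The plan is to derive the gauge transformation relating $\omega^+$ and $\omega^-$ across the overlap $\tU$, and then read off the claimed limit from the good behavior of $\omega^+$ at $p^+$. First I would reinterpret the clutching relation~\ref{E:clutch} at the level of sections: it says precisely that $v^+=\mC\cdot v^-$ on $\tU$, where $\mC$ now denotes the smooth $SO(k)$-valued function $p\mapsto\mC(a)$ with $a=a(p)$. Differentiating this identity with $\nabla_X$ for a tangent field $X$ on $\tU$, and comparing the two sides via Lemma~\ref{L:lieb} and the definition~\ref{Eq:condef} of $\omega^\pm$, I expect to reach the gauge relation
\[\omega^-(X)=\mC\,\omega^+(X)\,\mC^{-1}-X(\mC)\cdot\mC^{-1}.\]
Evaluating on the Killing field $\hx$ and invoking definition~\ref{E:dud} then gives, for each $p\in\tU$,
\[\mF^-_p(X)=\mC(a)\,\mF^+_p(X)\,\mC(a)^{-1}-\hx(\mC)(p)\cdot\mC(a)^{-1}.\]

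The second paragraph would analyze the two terms along $\ga(t)=((\cos t)a,\sin t)$ as $t\ra\frac{\pi}{2}$. For the first term, the essential point is that $p^+\in U^+$, so $\omega^+$ is a genuinely smooth $so(k)$-valued $1$-form near $p^+$; since $\hx(p^+)=((\cos\frac{\pi}{2})\tilde X(a),0)=0$, continuity gives $\mF^+_{\ga(t)}(X)\ra\omega^+_{p^+}(\hx(p^+))=0$, and conjugation by the fixed matrix $\mC(a)$ sends this term to $0$. For the second term, I would show that $\hx(\mC)$ is \emph{independent of the latitude} $t$ along $\ga$. Because the projection $a:\tU\ra S^{n-1}$ intertwines the two $H$-actions, its differential carries $\hx$ to $\tilde X$, i.e.\ $da_p(\hx(p))=\tilde X(a)$ for every $p\in\tU$, with no surviving factor of $\cos\alpha$. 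Hence
\[\hx(\mC)(\ga(t))=d\mC_a\bigl(da_{\ga(t)}(\hx(\ga(t)))\bigr)=d\mC_a(\tilde X(a)),\]
a constant matrix in $t$, and right-multiplying by $\mC(a)^{-1}$ reproduces exactly $\mQ_a(X)$. Combining the two limits yields $\lim_{t\ra\frac{\pi}{2}}\mF^-_{\ga(t)}(X)=0-\mQ_a(X)=-\mQ_a(X)$.

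The routine but error-prone step is the derivation of the gauge formula: one must track which trivialization each quantity lives in and fix the conjugation together with the sign of $X(\mC)\cdot\mC^{-1}$, since the sign in the conclusion depends on it. The genuinely substantive step—and the one I expect to be the main obstacle—is the cancellation of the $\cos\alpha$ factor, i.e.\ verifying that $da_p(\hx(p))=\tilde X(a)$ exactly. This is what makes $\hx(\mC)$ constant along the meridian and forces the limit to collapse to the clean value $-\mQ_a(X)$; geometrically it reflects that $\hx$ and $\ha$ together span $T_pS^n$ while the clutching function depends only on the equatorial coordinate $a$, which the $H$-action transports rigidly at every latitude.
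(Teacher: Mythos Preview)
Your argument is correct and is essentially the paper's own proof, repackaged: both pass $v^+$ through the clutching relation $v^+=(\mC\circ a)\cdot v^-$, differentiate with Lemma~\ref{L:lieb}, and invoke the smoothness of the plus-side at $p^+$. The only cosmetic difference is that the paper phrases the last step as boundedness of the section $W(t)=\nabla_{\hat X/|\hat X|}(v^+)$ as $t\ra\pi/2$, whereas you write the full gauge identity $\mF^-_p(X)=\mC(a)\,\mF^+_p(X)\,\mC(a)^{-1}-\mQ_a(X)$ and then let $\mF^+_{\gamma(t)}(X)\ra 0$ directly.
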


\begin{proof}
Since both sides of the desired equation vanish when $X\in\mk_a$, it will suffices to prove the equation when $X$ is a unit-length element of $\mpp_a$, in which case $|\hx(\gamma(t))|=\cos t$.  The following is a smooth unit-length tangent field along $\gamma$:
$$X(t) = \frac{\hat{X}(\gamma(t))}{|\hat{X}(\gamma(t))|} = \frac{\hat{X}(\gamma(t))}{\cos(t)}.$$
Fix $v\in\R^k$ and consider the section $W(t)  =  \nabla_{X(t)}\left(v^+\right)$ along $\gamma$.  Equation~\ref{E:clutch} and Lemma~\ref{L:lieb} yield:
\begin{align*}
W(t)  =  \nabla_{X(t)}\left(v^+\right)
      = \frac{1}{\cos t}\cdot\nabla_{\hx(t)}\left((\mC\circ a)\cdot v^-\right)
      =  \frac{1}{\cos t} \cdot \left( d\mC_{a}(\tilde{X}(a)) + \mF^-_{\gamma(t)}(X)\cdot \mC(a) \right)\cdot v^-.
\end{align*}
Since $W(t)$ naturally extends smoothly beyond the value $t=\frac{\pi}{2}$, we see that $\lim_{t\ra\left(\frac{\pi}{2}\right)}|W(t)|$ is finite, from which the result follows.
\end{proof}

We will henceforth work entirely over $U^-$, which allows us to omit the ``$-$'' superscripts whenever it's convenient to do so, in particular defining $U=U^-$, $\omega=\omega^-$, $\mF=\mF^-$, and $p_0=p^-$. From this viewpoint, the results of this section can be summarized as follows.  We identify the bundle over $U=S^n-\{-p_0\}$ with $U\times\R^k$ via parallel transport along longitudes.  For each $v\in\R^k$, we denote by $v^-$ the corresponding ``constant'' section over $U$ with respect to this identification.  The connection over $U$ is fully determined by its connection form $\omega$, which is itself fully determined by the function $\mF$ that associated to each $p\in \tU=U-\{p_0\}$ a linear map $\mF_p:\mh\ra so(k)$.  More precisely, for every $v\in\R^k$, $X\in \mh$ and $p\in\tU$:
$$\nabla_{\ha} v^- = \omega(\ha)\cdot v^- = 0,\quad\text{ and }\quad (\nabla_{\hx} v^-)_p = \omega(\hx(p))\cdot v^-(p) = \mF_p(X)\cdot v^-(p).$$
Finally, the asymptotic behavior of $\mF$ is described as follows: as $p\ra -p_0$ along the longitude geodesic through $a\in S^{n-1}$, we have for all $X\in\mh$ that:
$$\mF_p(X) \ra -\mQ_a(X) = -d\mC_{a}(\tilde{X}(a))\cdot \mC(a)^{-1}.$$
%%---------------------------
\section{Examples} In this section, we describe the functions $\mC,\mF,\mQ$ for natural examples of parallel connections, beginning with the Levi-Civita connection.

\begin{lem} If $E=TS^n$, $\nabla$ is the Levi-Civita Connection, and $H=SO(n)$, then for all $p=((\cos\alpha)a,\sin\alpha)\in\tU$ and all $X\in\mh=so(n)$:
$$\mQ_a(X) = 2X^{\mpp_a},\text{ and } \mF_p(X) = -(\sin\alpha+1)X^{\mpp_a},$$
where the superscripts denotes the orthogonal projection onto the subspace $\mpp_a$.
\end{lem}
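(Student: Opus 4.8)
The plan is to exploit the fact that parallel transport along a great circle on the round sphere is completely explicit: the unit velocity field is parallel because the curve is a geodesic, while any vector in $\R^{n+1}$ orthogonal to the $2$-plane containing the circle stays constant, hence remains tangent and parallel. I would fix the identifications $E_{p^\pm}=T_{p^\pm}S^n\cong\R^n$ by reading off the first $n$ coordinates of a tangent vector, keeping in mind that the normalization $\mC(a_0)=I$ cannot be achieved with these naive identifications on both sides: the naive clutching map turns out to be an orientation-reversing reflection, so one identification must be precomposed with $\rho_{a_0}:=I-2a_0a_0^T$ in order to land in $SO(n)$. Tracking this twist is where most of the care is needed.

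First I would compute $\mC$. Parallel transport from $p^+$ to $p^-$ along the longitude through $(a,0)$ carries the initial velocity $(a,0)$ to the terminal velocity $-(a,0)$ and fixes every $(\xi,0)$ with $\xi\perp a$; in the naive coordinates this is exactly the reflection $\rho_a=I-2aa^T$. After the normalization twist this gives $\mC(a)=\rho_{a_0}\rho_a\in SO(n)$. Using $\tilde{X}(a)=Xa$ and differentiating the product $\rho_{a_0}(I-2aa^T)$ in the direction $Xa$, a short computation (using $a^Ta=1$ and $a^T(Xa)=0$) collapses $\mQ_a(X)=d\mC_a(\tilde{X}(a))\cdot\mC(a)^{-1}$ to $2\rho_{a_0}\big((Xa)a^T-a(Xa)^T\big)\rho_{a_0}$. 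The antisymmetric matrix $(Xa)a^T-a(Xa)^T$ is precisely the orthogonal projection $X^{\mpp_a}$ of $X$ onto $\mpp_a=\{aw^T-wa^T:w\perp a\}$, as one checks by verifying that $X-\big((Xa)a^T-a(Xa)^T\big)$ annihilates $a$ and hence lies in $\mk_a$. Finally, once the two copies of $so(n)$ are matched—the structure algebra $so(k)$ acting in the twisted $p^-$ frame versus $\mh=so(n)$ acting on the equatorial $\R^n$—the conjugation by $\rho_{a_0}$ is absorbed, yielding $\mQ_a(X)=2X^{\mpp_a}$.

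Next I would compute $\mF$. I would write the longitude-parallel section $v^-$ explicitly: transporting $(\rho_{a_0}v,0)$ from $p^-$ out to $p=((\cos\alpha)a,\sin\alpha)$ by the great-circle rule gives a closed-form $\R^{n+1}$-valued expression for $v^-(p)$. The flow of $\hx$ is the $SO(n)$-action, which keeps $\alpha$ fixed and rotates $a$ with velocity $Xa$, so I can differentiate $v^-$ along this flow in the ambient $\R^{n+1}$ and then take the tangential projection (subtracting the radial component along $p$) to obtain $\nabla_{\hx}v^-$. A direct calculation produces an overall factor $(1+\sin\alpha)$, and reading the result back through the trivialization identifies the connection matrix as $\mF_p(X)=-(1+\sin\alpha)X^{\mpp_a}$. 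As a consistency check, letting $\alpha\to\pi/2$ gives $-2X^{\mpp_a}=-\mQ_a(X)$, in agreement with Proposition~\ref{Mags}.

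The main obstacle is not any single calculation but the careful bookkeeping of the three identifications in play: the naive frames $\iota^\pm$, the twist $\rho_{a_0}$ forced by $\mC(a_0)=I$, and the final identification of $so(k)$ with $\mh=so(n)$. The reflection $\rho_{a_0}$ must be shown to conjugate away in both formulas, which it does because $\rho_{a_0}^2=I$. Should this direct bookkeeping become error-prone, the clean alternative is to invoke $H$-equivariance: both $\mQ$ and $\mF$ transform naturally under the $SO(n)$-action, and the target expressions $X^{\mpp_a}$ are equivariant, so it suffices to verify both identities at the single base point $a=a_0$, where $\mC(a_0)=I$ and no twist appears, and then propagate by symmetry.
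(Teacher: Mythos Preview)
Your strategy is sound but takes a different route from the paper. Where you write $\mC(a)=\rho_{a_0}\rho_a$ explicitly as a product of reflections and differentiate the matrix formula, the paper instead computes the holonomy $P_{\beta_a}$ by working inside the totally geodesic $2$-sphere $S^n\cap\text{span}\{p_0,a_0,a^\perp\}$, obtaining that $\mC(a)$ is a rotation by $2\theta$ in $\text{span}\{a_0,a^\perp\}$; it then recasts this as the statement that $\mC\circ\pi$ restricted to $\exp(\mpp)$ equals the squaring map $g\mapsto g^2$, so that $\mQ_{a_0}(X)=d(\mC\circ\pi)_I(X)=2X$ drops out in one line. Likewise for $\mF$, the paper does not write $v^-$ as an explicit $\R^{n+1}$-valued section and differentiate in the ambient space; it uses the loop description $\mF_p(X)=-\frac{d}{dt}\big|_{t=0}P_{\gamma_t}$ of Equation~\ref{beta} and computes the holonomy of the thin triangle $\gamma_t$ inside the totally geodesic $2$-sphere through $p_0,a,\tilde X(a)$ as a rotation by angle $t(\sin\alpha+1)$. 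Your approach trades this Gauss--Bonnet-style geometry for straight matrix algebra---more mechanical and easier to carry out without geometric insight---while the paper's holonomy viewpoint avoids the frame-twist bookkeeping altogether by always working with loops based at a single point.

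The one place to push back is the assertion that ``the conjugation by $\rho_{a_0}$ is absorbed.'' It is not inert: for $X\in\mpp_{a_0}$ one checks $\rho_{a_0}X\rho_{a_0}=-X$, so the conjugation flips a sign rather than disappearing, and for general $a$ it does not even preserve $\mpp_a$. Obtaining the stated formula therefore requires you to spell out exactly which isomorphism $so(k)\cong\mh$ you are invoking; that isomorphism is precisely conjugation by the frame change, so the step is ultimately correct, but only once ``matched'' is made explicit. Your own fallback---verify the identity at $a=a_0$ and then propagate by $SO(n)$-equivariance---is essentially how the paper itself passes from $a_0$ to general $a$ (``since $a_0$ is only artificially distinguished\ldots''), and is the cleaner way to close the argument.
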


We will prove this using only elementary facts about parallel transport with respect to the Levi-Civita connection.

\begin{proof}
Let $a\in S^{n-1}$ and define $\beta_a$ as in Equation~\ref{PE}.  Assume $a\neq\pm a_0$ and let $a^\perp$ denote the normalized component of $a$ orthogonal to $a_0$, so that $a=(\cos\theta)a_0 + (\sin\theta)a^\perp$
for some $\theta\in(0,\pi)$.  A straightforward holonomy calculation in the totally geodesic $S^2$ formed by intersecting $S^n$ with $\text{span}\{p_0,a_0,a^\perp\}$ gives:
\begin{equation}\label{LC1}\mC(a) = P_{\beta_a} \text{ is a rotation by angle $2\theta$ in $\text{span}\{a_0,a^\perp\}$}.\end{equation}

We reinterpreted this conclusion in terms of the homogeneous description of the equatorial sphere as $S^{n-1}=H/K=SO(n)/SO(n-1)$ by considering the composition:
$$\mathfrak{p}\stackrel{\exp}{\longrightarrow} SO(n)\stackrel{\pi}{\longrightarrow} S^{n-1}\stackrel{\mC}{\longrightarrow}{SO(n)}.$$
For any $X\in\mathfrak{p}$, Equation~\ref{LC1} gives that $(\mC\circ\pi\circ\exp)(X)=\exp(2X)$.  Therefore, the image of the clutching map equals $\exp(\mpp)$, and furthermore the composition $\mC\circ\pi:SO(n)\ra SO(n)$ restricted to $\exp(\mathfrak{p})$ equals the squaring map $g\mapsto g^2$.

Using this characterization of $\mC$, we next compute $\mQ_{a_0}(X)$ for arbitrary $X\in\mpp$.  Since $\pi(I)=a_0$ and since $\mC\circ\pi$ equals the squaring map on $\exp(\mpp)$, we have:
$$\mQ_{a_0}(X) = d\mC_{a_0}(\tilde{X}(a_0)) =  d(\mC\circ\pi)_I(X)=\frac{d}{dt}\Big|_{t=0}\left(\exp(tX)\right)^2=2X.$$
Since $\mQ_{a_0}$ vanishes on $\mk$, we have more generally that $\mQ_{a_0}(X)=X^\mpp$ for all $X\in\mh$.  Since $a_0$ is only artificially distinguished in the discussion, it is straightforward to derive from this the more general rule that $\mQ_a(X)=X^{\mpp_a}$ for all $a\in S^{n-1}$ and all $X\in\mh$.

For $p=((\cos\alpha)a,\sin\alpha)\in\tU$ and $X\in\mpp_a$ with $|\tilde{X}(a)|=1$, we next wish to compute $\mF_p(X)$.  For this, define $\gamma_t$ as in Equation~\ref{beta}. A straightforward holonomy calculation in the totally geodesic $S^2$ formed by intersecting $S^n$ with $\text{span}\{p_0,a,\tilde{X}(a)\}$ gives that $P_{\gamma_t}$ equals a rotation by angle $t(\sin\alpha+1)$ in the plane spanned by $a$ and $\tilde{X}(a)$.  Therefore by definition of $\mF$:
\begin{align}\label{hihi}
\mF_p(X) = -\frac{d}{dt}\Big|_{t=0} P_{\gamma_t} & = -(\sin\alpha+1) \cdot\left(\text{infinitesimal rotation in $\text{span}\{a,\tilde{X}(a)\}$} \right)\\
  &= -(\sin\alpha+1)X.\notag
\end{align}
More generally, $\mF_p(X) = -(\sin\alpha+1)X^{\mpp_a}$ for every $X\in\mh$.
\end{proof}

\begin{prop}\label{P:e2} If $G=SO(n+1)$, $H=SO(n)$, $\rho:H\ra SO(k)$ is a homomorphism,
$E$ is the associated bundle $G\times_\rho\R^k$, and $\nabla$ be the connection inherited from the natural principal connection in the bundle $G\ra G/H$, then for all $p=((\cos\alpha)a,\sin\alpha)\in\tU$ and all $X\in\mh=so(n)$:
$$\mQ_a(X) = 2\varphi(X^{\mpp_a}),\text{ and } \mF_p(X) = -(\sin\alpha+1)\varphi(X^{\mpp_a}),$$
where $\varphi=d\rho_I:\mh\ra so(k)$.  This implies that for all $a\in S^{n-1}$ and $X\in\mpp_a$, we have:
\begin{equation}\label{E:tt}2(\tilde{X}Q)_a(Y)=-2(\tilde{Y}Q)_a(X)=[\mQ_a(X),\mQ_a(Y)] - \mQ_a([X,Y]).\end{equation}
\end{prop}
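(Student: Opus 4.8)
The proposition makes two assertions --- the closed forms for $\mQ_a$ and $\mF_p$, and the resulting identity~\ref{E:tt} --- and I would treat them in turn. For the formulas, the plan is to reduce to the Levi-Civita computation of the previous lemma. Since $E=G\times_\rho\R^k$ carries the connection induced from the principal bundle $SO(n+1)\to SO(n+1)/SO(n)=S^n$, parallel transport in $E$ is $\rho$ applied to the holonomy of the principal connection. The tangent bundle $TS^n$ is itself the bundle associated to the (faithful) standard representation of $SO(n)$, and its induced connection is the Levi-Civita connection, so the principal holonomy around any loop is identified with the Levi-Civita holonomy already computed. Consequently the clutching map factors as $\mC=\rho\circ\mC^{\mathrm{LC}}$. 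Feeding this into the definitions of $\mQ$ and $\mF$ and passing $\varphi=d\rho_I$ through the logarithmic derivative (using $\rho(g)^{-1}=\rho(g^{-1})$ and the chain rule) yields $\mQ_a(X)=\varphi(\mQ^{\mathrm{LC}}_a(X))=2\varphi(X^{\mpp_a})$ and $\mF_p(X)=\varphi(\mF^{\mathrm{LC}}_p(X))=-(\sin\alpha+1)\varphi(X^{\mpp_a})$.

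The heart of the identity~\ref{E:tt} is the computation, for $X,Y\in\mpp_a$, that $(\tilde{X}Q)_a(Y)=2\varphi([X,Y])$. To obtain it I would differentiate $\mQ_{a(s)}(Y)=2\varphi\big(Y^{\mpp_{a(s)}}\big)$ along the flow $a(s)=\exp(sX)\cdot a$ of $\tilde{X}$. As $s$ varies, $\mpp_{a(s)}$ is the image of $\mpp_a$ under $\Ad_{\exp(sX)}$, so the orthogonal projection onto it is the conjugate $\Ad_{\exp(sX)}\circ(\,\cdot\,)^{\mpp_a}\circ\Ad_{\exp(-sX)}$; differentiating at $s=0$ gives $\tfrac{d}{ds}\big|_0 Y^{\mpp_{a(s)}}=[X,Y^{\mpp_a}]-[X,Y]^{\mpp_a}$. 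For $X,Y\in\mpp_a$ the symmetric-space bracket relation $[\mpp_a,\mpp_a]\subset\mk_a$ (valid because $S^{n-1}=SO(n)/SO(n-1)$ is symmetric, and preserved by the $\Ad$-conjugation defining $\mpp_a,\mk_a$) forces $[X,Y]^{\mpp_a}=0$, while $Y^{\mpp_a}=Y$; thus the derivative is simply $[X,Y]$ and $(\tilde{X}Q)_a(Y)=2\varphi([X,Y])$.

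With this in hand the identity follows in two ways that cross-check each other. Directly, $(\tilde{X}Q)_a(Y)=2\varphi([X,Y])$ is manifestly antisymmetric, so $2(\tilde{X}Q)_a(Y)=-2(\tilde{Y}Q)_a(X)=4\varphi([X,Y])$; and since $\varphi$ is a Lie algebra homomorphism with $\mQ_a(X)=2\varphi(X)$, $\mQ_a(Y)=2\varphi(Y)$, and $\mQ_a([X,Y])=2\varphi([X,Y]^{\mpp_a})=0$, the right-hand side is $[\mQ_a(X),\mQ_a(Y)]-\mQ_a([X,Y])=4[\varphi(X),\varphi(Y)]=4\varphi([X,Y])$, so all three expressions agree. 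Structurally, the right-hand side is exactly the antisymmetric combination $(\tilde{X}Q)_a(Y)-(\tilde{Y}Q)_a(X)$ delivered by the Maurer--Cartan structure equation for $Q=\mC^*\theta$ (with $\theta$ the right-invariant Maurer--Cartan form of $SO(k)$ and $[\tilde{X},\tilde{Y}]=-\widetilde{[X,Y]}$); combined with the antisymmetry of $(\tilde{X}Q)_a$, this reproduces the factor of two.

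I expect the main obstacle to be the differentiation of the moving projection in the second step, and in particular pinning down the sign in $\mpp_{a(s)}=\Ad_{\exp(sX)}\mpp_a$: the opposite convention flips the sign of $(\tilde{X}Q)_a(Y)$ and destroys the match with the right-hand side, so this must be reconciled carefully with the definition of $\mpp_a$ and the flow of $\tilde{X}$ (the Maurer--Cartan cross-check is useful precisely here). The second delicate point, easy to overlook, is that it is exactly the relation $[\mpp_a,\mpp_a]\subset\mk_a$ that simultaneously kills the $[X,Y]^{\mpp_a}$ term in the derivative and makes $\mQ_a([X,Y])$ vanish, which is what forces the two sides to agree; accordingly the identity should be read with both $X$ and $Y$ in $\mpp_a$.
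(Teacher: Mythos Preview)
Your proposal is correct and follows essentially the same route as the paper: reduce to the Levi-Civita case via $\mC=\rho\circ\mC^{\mathrm{LC}}$ (the paper cites \cite[Diagram 1.2]{GSW} for $P_\gamma=\rho(P_\gamma^{\mathrm{LC}})$), push $\varphi=d\rho_I$ through the logarithmic derivatives to obtain the formulas for $\mQ_a$ and $\mF_p$, and then differentiate the moving projection $Y\mapsto Y^{\mpp_{a(s)}}$ to get $(\tilde X\mQ)_a(Y)=2\varphi([X,Y])$, from which Equation~\ref{E:tt} follows. Your caution about the sign of $\mpp_{a(s)}=\Ad_{\exp(\pm sX)}\mpp_a$ is well placed---the paper simply asserts $\tfrac{d}{dt}\big|_0 Y^{\mpp_{a(t)}}=[X,Y]^{\mk_a}$ as ``a straightforward calculation''---and your Maurer--Cartan cross-check is a nice addition not in the paper.
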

Notice that the previous Lemma is the special case of this proposition with $k=n$ and $\rho=I$.
\begin{proof}
According to~\cite[Diagram 1.2]{GSW}, parallel transport around a loop $\gamma$ in $S^n$ with respect to $\nabla$ (denoted $P_\gamma$) and with respect to the Levi-Civita connection (denoted $P_\gamma^{LC}$) are related by:
$$P_{\gamma} = \rho(P^{LC}_{\gamma}).$$
Let $a\in S^{n-1}$.  Defining $\beta_a$ as in Equation~\ref{LC1}, the clutching map of $\nabla$ (denoted $\mC$) is related to the clutching map of the Levi-Civita connection (denoted $\mC^{LC}$) as follows:
$$\mC(a) = P_{\beta_a} = \rho(P^{LC}_{\beta_a}) = \rho(\mC^{LC}(a)) = (\rho\circ\mC^{LC})(a),$$
in other words, $\mC = \rho\circ\mC^{LC}$.  Using ``LC'' superscripts for objects related to the Levi-Civita connection, we next relate $\mQ_a$ and $\mQ^{LC}_a$ by setting $y=\mC^{LC}(a)\in SO(n)$ and observing that for all $X\in\mh$:
\begin{align*}
  \mQ_a(X)
   & = d\mC_a(\tilde{X}(a))\cdot\mC(a)^{-1}
     = d(\rho\circ \mC^{LC})_a(\tilde{X}(a))\cdot(\rho\circ\mC^{LC})(a)^{-1} \\
   & = d\rho_y(d\mC^{LC}_a(\tilde{X}(a)))\cdot\rho(y)^{-1} = d\rho_y(\mQ_a^{LC}(X)\cdot y)\cdot\rho(y)^{-1}\\
   & = d\rho_I(\mQ_a^{LC}(X)) = d\rho_I(2X^{\mpp_a}) = 2\varphi(X^{\mpp_a}).
\end{align*}

To relate $\mF$ and $\mF^{LC}$, we define $\gamma_t$ as in Equation~\ref{hihi} and observe that:
$$\mF_p(X) = -\frac{d}{dt}\Big|_{t=0} P_{\gamma_t} = -\frac{d}{dt}\Big|_{t=0}\rho(P^{LC}_{\gamma_t})
  =d\rho_I(\mF_p^{LC}) = -(\sin\alpha+1)\varphi(X^{\mpp_a}).$$

Finally to prove Equation~\ref{E:tt}, let $a\in S^{n-1}$, $g\in\pi^{-1}(a)$, and $X,Y\in\mpp_a$.  Consider the path $g(t)=e^{tX}\cdot g$ in $H$ and the path $a(t) = \pi(g(t))$ in $S^{n-1}$.  Notice that $a(0)=a$ and $a'(0)=\tilde{X}(a)$. A straightforward calculation shows that $\frac{d}{dt}|_{t=0}Y^{\mpp_{a(t)}} = [X,Y]^{\mk_a}$, which is the same as $[X,Y]$ because $\mk\subset\mh$ is a symmetric pair.  Thus,
$$
(\tilde X\mQ)_a(Y)= \frac{d}{dt}\Big|_{t=0}\mQ_{a(t)}(Y)= 2\varphi\left(\frac{d}{dt}\Big|_{t=0}Y^{\mpp_{a(t)}}\right) = 2\varphi\left([X,Y]\right),$$
from which Equation~\ref{E:tt} follows.
\end{proof}

Our setup requires us to choose a Lie group $H$ that acts transitively on the equatorial sphere $S^{n-1}$.  We selected $H=SO(n)$ in the previous examples, but in certain situations it will be advantageous to choose a smaller-dimensional group.  For bundles over $S^4$, the choice $H=Sp(1)$ has the advantage of being a free action, so the isotropy group $K$ becomes trivial.  In particular, to study a quaternionic $\HH$-bundle over $S^4$ with a connection $\nabla$ that is compatible with the fiber quaternionic structure, we can consider $\mC:Sp(1)\ra Sp(1)$, where elements of the image act on the fiber $\R^4\cong\HH$ via quaternionic multiplication.  In the following well-known example, $\mC$ is the identity:

\begin{prop}If $S^3\ra S^7\ra\HP^1\cong S^4$ is the Hopf bundle, $E=(S^7\times\HH)/Sp(1)$ is the associated vector bundle, $\nabla$ is the connection inherited from the principal connection in the Hopf bundle, and $H=Sp(1)$, then $\mC:Sp(1)\ra Sp(1)$ is the identity map, and for all $p=((\cos\alpha)a,\sin\alpha)\in\tU$ and all $X\in\mh=sp(1)$:
$$\mQ_a(X) = X,\text{ and } \mF_p(X) = -\frac 12(\sin\alpha+1)X.$$
\end{prop}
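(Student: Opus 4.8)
The plan is to treat this as the Hopf-bundle analogue of Proposition~\ref{P:e2} and to run the same strategy, the one new geometric input being that the clutching map of the canonical connection on the \emph{principal} Hopf bundle is the identity rather than a squaring map. First I would realize everything concretely: write $S^7=\{(u,v)\in\HH^2:|u|^2+|v|^2=1\}$ with structure group $Sp(1)$ acting by right multiplication $(u,v)\mapsto(ug,vg)$, take the canonical (Riemannian) principal connection, whose connection form is $\omega_{\mathrm{prin}}=\mathrm{Im}(\bar u\,du+\bar v\,dv)$, and let $H=Sp(1)$ act so as to induce left multiplication $a\mapsto qa$ on the equatorial $S^3$ while fixing $p^\pm$. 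Since $\nabla$ is the associated connection, the relation $P_\gamma=\rho(P^{\mathrm{prin}}_\gamma)$ from \cite{GSW} reduces all three of $\mC,\mQ,\mF$ to the corresponding data of the principal Hopf connection, with $\varphi=d\rho_I$ the inclusion $sp(1)\hookrightarrow so(4)$ by left multiplication; thus it suffices to compute the principal quantities.

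The computational heart is an explicit horizontal lift of the longitude geodesics. Writing $p=((\cos\alpha)a,\sin\alpha)$, I would check that the curve $\tilde\sigma(p)=(c(\alpha)a,\,v(\alpha))$ with $c^2=\tfrac{1+\sin\alpha}{2}$ and $v^2=\tfrac{1-\sin\alpha}{2}$ (the lift with $v$ real) satisfies the horizontality condition $\mathrm{Im}(\dot u\bar u+\dot v\bar v)=0$ and hence is the horizontal lift of the $a$-longitude from $(0,1)$ over $p^-$ to $(a,0)$ over $p^+$. Tracing this lift around the loop $\beta_a$ (out along the $a$-longitude, back along the $a_0$-longitude) shows that the principal holonomy is multiplication by $a$, i.e.\ $\mC=\mathrm{id}$ as a self-map of $Sp(1)$; this also recovers the familiar fact that the quaternionic Hopf bundle is clutched by a generator of $\pi_3(Sp(1))$. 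Because $K$ is trivial here, $\pi:H\to S^3$ is the identity, so the Proposition~\ref{P:e2} computation of $\mQ_{a_0}$ goes through verbatim except that the squaring map is replaced by the identity map, turning the factor $2$ into $1$ and giving $\mQ_a(X)=X$.

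For $\mF$ I would compute directly in the longitude trivialization, using $\mF_p(X)=\omega(\hat X)=\omega_{\mathrm{prin}}\big(d\tilde\sigma(\hat X)\big)$. Since the flow of $\hat X$ moves $a$ but fixes $\alpha$, one gets $d\tilde\sigma(\hat X)=(Xu_0,0)$ at $\tilde\sigma(p)=(u_0,v_0)$ with $u_0=c(\alpha)a$, whence $\omega_{\mathrm{prin}}(d\tilde\sigma(\hat X))=\mathrm{Im}(\bar u_0Xu_0)=|u_0|^2\cdot(\cdots)=\tfrac{1+\sin\alpha}{2}\cdot(\cdots)$. The scalar is thus exactly the latitude weight $|u_0|^2=\tfrac{1+\sin\alpha}{2}$, which is independent of the representation and explains why the $\alpha$-dependence matches the Levi-Civita Lemma; combined with $\mQ_a(X)=X$ and the sign fixed by Proposition~\ref{Mags} (which forces $\lim_{\alpha\to\pi/2}\mF_p(X)=-\mQ_a(X)=-X$), this yields $\mF_p(X)=-\tfrac{1}{2}(\sin\alpha+1)X$.

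The step I expect to be the main obstacle is bookkeeping the various conventions so that the clean, representation-free forms $\mQ_a(X)=X$ and $\mF_p(X)=-\tfrac12(\sin\alpha+1)X$ hold for every $a$ and not merely at the basepoint $a_0$. A naive computation produces an extra $\Ad_{a^{-1}}$ twist, reflecting how the longitude-parallel frame rotates the fiber; I would either (i) pin down the lift of the $H$-action to $E$ so that its fiberwise action in the trivialization is left multiplication, making the $H$-equivariance $\mF_{g\cdot p}(\Ad_gX)=L_g\,\mF_p(X)\,L_g^{-1}$ absorb the twist, or (ii) simply compute at $a_0$, where the twist is trivial, and extend to general $a$ by the symmetry argument used in Proposition~\ref{P:e2} (``$a_0$ is only artificially distinguished''). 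Verifying horizontality of $\tilde\sigma$ and nailing the orientation of $\beta_a$ (which affects $\mC$ only by $a\mapsto a^{\pm1}$, an inessential choice of representation) are the remaining routine points.
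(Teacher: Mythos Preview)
Your proposal is a correct direct-computation approach, and in fact it is precisely the route the paper alludes to but deliberately does \emph{not} take. The paper's treatment of this proposition is essentially a citation: it observes that the formula $\mF_p(X)=-\tfrac12(\sin\alpha+1)X$ is equivalent to writing the connection form as $\omega=\tfrac12(\sin\alpha+1)\,(a\cdot d\bar a)$, pushes this forward under stereographic projection to obtain $\mathrm{Im}(z\,d\bar z)/(1+|z|^2)$, and then remarks that this is the standard basic-instanton formula in the Yang--Mills literature (referring to Atiyah). No holonomy, horizontal lifts, or $\mQ$-computations are actually carried out in the paper.

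So the difference is one of self-containment versus appeal to the literature. Your approach supplies exactly what the paper skips: an explicit horizontal section $\tilde\sigma(p)=(c(\alpha)a,v(\alpha))$ of the Hopf bundle over longitudes, from which $\mC=\mathrm{id}$, $\mQ_a(X)=d\mC_a(Xa)\cdot a^{-1}=X$, and the latitude weight $c(\alpha)^2=\tfrac12(1+\sin\alpha)$ all drop out. The paper's shortcut buys brevity and situates the result in the instanton context; your computation buys an honest proof that does not presuppose familiarity with the basic instanton. The bookkeeping hazard you flag---the $\Ad_{a^{-1}}$ twist coming from $\mathrm{Im}(\bar u_0 X u_0)=\bar a X a$---is real, and your plan (ii), computing at $a_0$ and transporting by homogeneity, is the clean way around it; it mirrors exactly how the paper handles the analogous step in Proposition~\ref{P:e2}.
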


This proposition can be verified by direct calculation, but this is not necessary because a minor rephrasing of the proposition is well-known.  The formula $\mF_p(X) = -\frac 12(\sin\alpha+1)X$ is equivalent to the following expression for the connection form:
$$\omega = \frac 12(\sin\alpha +1)(a\cdot d\overline{a}),$$
where the overline denotes quaternionic conjugation in $S^3\subset\R^4\cong\HH$.  The push-forward of $\omega$ via the stereographic projection map $P:U\ra\HH$ defined as $z=P((\cos\alpha)a,\sin\alpha) = \frac{(\cos\alpha)a}{1-\sin\alpha}$ equals:
$$P_*\omega = \frac{\text{Im}(z\cdot d\overline{z})}{1+z^2},$$
which is the standard expression in the mathematical physics literature for the basic instanton.  This expression first appeared in the literature as a surprising solution to the Yang-Mills equation on $\R^4$ (space-time), but its above-described relationship to the Hopf bundle eventually became well-understood; see for example~\cite{Atiyah}.

%%-----------------------------------------------
\section{Curvature formulas}
In this section, we return to the generality of Section 2, where $E$ is an arbitrary vector bundle over $S^n$ and $\nabla$ is an arbitrary connection described in local coordinates through the functions $\mC,\mF,\mQ$.  Our goal is to derive curvature formulas for $\nabla$ in terms of these functions.

Let $R^\nabla$ denote the curvature tensor of $\nabla$ and let $\Omega$ denote the $ so(k)$-valued two-form on $U$ defined such that for every pair $A,B$ of vector fields on $U$ and every $v\in\R^k$,
$$R^\nabla(A,B) v^- = \Omega(A,B)\cdot v^- .$$
The form $\Omega$ is determined by $\omega$ as follows:

\begin{lem}\label{L:contocurv} For any $X,Y\in sp(1)$ the following holds on $\tU$:
\begin{align*}
\Omega(\ha,\hx)   & = (\ha\mF)(X),\\
  \Omega(\hx,\hy) &= (\hx\mF)(Y) - (\hy\mF)(X) + \mF([X,Y]) + [\mF(X),\mF(Y)].
\end{align*}
\end{lem}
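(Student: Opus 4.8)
The plan is to apply the Cartan structure equation to the two relevant pairs of vector fields. First I would recover the structure equation in this trivialization: for a constant section $v^-$ and vector fields $A,B$ on $U$, the definition $R^\nabla(A,B)v^- = \nabla_A\nabla_B v^- - \nabla_B\nabla_A v^- - \nabla_{[A,B]}v^-$, combined with Lemma~\ref{L:lieb} applied to the matrix-valued function $M=\omega(B)$ (and symmetrically to $\omega(A)$), yields
$$\Omega(A,B) = A(\omega(B)) - B(\omega(A)) - \omega([A,B]) + [\omega(A),\omega(B)],$$
where $A(\omega(B))$ denotes the derivative of the matrix-valued function $\omega(B)$ along $A$ and the final term is the matrix commutator. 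This reduces the lemma to evaluating each term with $A,B$ drawn from $\{\ha,\hx,\hy\}$, using the established identities $\omega(\ha)=0$, $\omega(\hx)=\mF(X)$, and $\omega(\hy)=\mF(Y)$.

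For the first formula, set $A=\ha$ and $B=\hx$. Since $\omega(\ha)=0$, both $B(\omega(A))$ and $[\omega(A),\omega(B)]$ vanish, leaving $\Omega(\ha,\hx) = \ha(\mF(X)) - \omega([\ha,\hx]) = (\ha\mF)(X) - \omega([\ha,\hx])$. The crux is therefore the claim $[\ha,\hx]=0$. This follows from the product structure of the coordinates $(\alpha,a)\in\left(-\frac{\pi}{2},\frac{\pi}{2}\right)\times S^{n-1}$: the round metric is $d\alpha^2 + \cos^2\alpha\, g_{S^{n-1}}$, so $\ha=\partial_\alpha$, whose flow translates $\alpha$ while fixing $a$; meanwhile the formula $\hx(p)=((\cos\alpha)\tilde{X}(a),0)$ shows that in this chart $\hx$ is the $\alpha$-independent lift of $\tilde{X}$, whose flow fixes $\alpha$ and moves $a$. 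These two flows commute, so $[\ha,\hx]=0$ and the first formula follows.

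For the second formula, set $A=\hx$ and $B=\hy$. The structure equation gives
$$\Omega(\hx,\hy) = (\hx\mF)(Y) - (\hy\mF)(X) - \omega([\hx,\hy]) + [\mF(X),\mF(Y)],$$
so it remains to identify $-\omega([\hx,\hy])$ with $\mF([X,Y])$. Here I would use that $X\mapsto\hx$ is the fundamental-vector-field map of the left $H$-action on $S^n$, hence a Lie algebra anti-homomorphism: $[\hx,\hy]=-\widehat{[X,Y]}$. (In the chart above this is immediate, since $\hx,\hy$ restrict to $\alpha$-independent copies of $\tilde{X},\tilde{Y}$ on $S^{n-1}$.) Consequently $\omega([\hx,\hy]) = \omega(-\widehat{[X,Y]}) = -\mF([X,Y])$, and substituting this back yields exactly $(\hx\mF)(Y) - (\hy\mF)(X) + \mF([X,Y]) + [\mF(X),\mF(Y)]$.

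The computation is mechanical once the structure equation is in hand; the only genuine content, and the place where I would be most careful, is the two bracket identities — establishing $[\ha,\hx]=0$ via the commuting flows, and especially getting the sign of the anti-homomorphism right, since it is precisely this sign that produces $+\mF([X,Y])$ rather than its negative in the final formula.
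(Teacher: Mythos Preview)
Your proposal is correct and follows essentially the same route as the paper: establish the structure equation $\Omega(A,B)=A(\omega(B))-B(\omega(A))-\omega([A,B])+[\omega(A),\omega(B)]$ and then specialize using $\omega(\ha)=0$, $[\ha,\hx]=0$, and $[\hx,\hy]=-\widehat{[X,Y]}$. The only difference is that you supply justifications for the two bracket identities, whereas the paper simply invokes them (they are recorded immediately afterward in Equation~\ref{hello}).
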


\begin{proof} It is straightforward to prove the following standard equation:
$$\Omega(A,B) = A(\omega(B))-B(\omega(A))-\omega([A,B]) + [\omega(A),\omega(B)]$$
for any pair $A,B$ of vector fields on $U$.  In particular:
\begin{align*}
\Omega(\ha,\hy) &= \ha\omega(\hy)-\hy\underbrace{\omega(\ha)}_0 - \omega(\underbrace{[\ha,\hy]}_0) + [\underbrace{\omega(\ha)}_0,\omega(\hy)] = (\ha \mF)(Y). \\
\Omega(\hat X,\hat Y) & = \hx\omega(\hy) - \hy\omega(\hx)-\omega([\hx,\hy]) + [\omega(\hx),\omega(\hy)] \\
   & = (\hx \mF)(Y) - (\hx \mF)(X) + \omega(\widehat{[X,Y]}) + [\mF(X),\mF(Y)].
\end{align*}
\end{proof}

We next collect some elementary facts about the Levi-Civita connection on $S^n$, which is henceforth denoted by ``$\overline\nabla$''.  First, we have for all $X,Y\in \mh$:
\begin{equation}\label{hello}
  [\hat\alpha,\hat X] = 0,\quad
  [\hat X,\hat Y]=-\widehat{[X,Y]},\quad\overline\nabla_{\hat\alpha}\hat\alpha=0,  \quad \overline\nabla_{\hat X}\hat\alpha = \overline\nabla_{\hat\alpha}\hat X = -(\tan\alpha)\hat X.
\end{equation}
Letting $\mathcal{S}$ denote shape operator of the latitude $S^{n-1}$ (the $\alpha$-level set), we next have:
$$\lb\overline\nabla_{\hx}\hy,\ha\rb = \lb \mathcal{S}_{\ha}(\hx),\hy\rb =- \lb\overline\nabla_{\hx}\ha,\hy\rb = (\tan\alpha)\lb\hx,\hy\rb.$$
The component of $\overline\nabla_{\hx}\hy$ tangent to the $\alpha$-level set at a point $p=((\cos\alpha)a,\sin\alpha)\in\tU$ is easier to express under the added assumption that $X,Y\in\mathfrak{p}_a$, in which case:
\begin{equation}\label{hello2} \left(\overline\nabla_{\hat X}\hat Y\right)_p = -\frac 12\widehat{[X,Y]}_p+(\tan\alpha)\lb \hx,\hy\rb_p\hat\alpha(p).
\end{equation}

We will use ``$D$'' to denote the covariant derivative of a tensor with respect to $\nabla$ and/or $\overline\nabla$.
Let $\Theta$ denote the $ so(k)$-valued tensor of order $3$ on $U$ defined as:
\begin{equation}\label{E:defTheta}\Theta(C,A,B) = (D_C\Omega)(A,B) + [\omega(C),\Omega(A,B)].\end{equation}
This tensor represents $DR^\nabla$ in the sense that:
\begin{lem}\label{L:dcurv} For every vector fields $A,B,C$ on $U$ and every $v\in\R^k$,
$$(D_CR^\nabla)(A,B)v^- = \Theta^\pm(C,A,B)\cdot v^-.$$
\end{lem}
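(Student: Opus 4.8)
The plan is to evaluate the tensorial covariant derivative $D_C R^\nabla$ on the constant sections $v^-$ and to recognize the resulting expression as $\Theta(C,A,B)\cdot v^-$. Since $R^\nabla$ is a two-form on $U$ with values in $\text{End}(E)$, its covariant derivative uses the Levi-Civita connection $\overline\nabla$ on the two tangential slots and the connection induced by $\nabla$ on the endomorphism slot. Spelled out, this is the standard identity
$$(D_CR^\nabla)(A,B)v^- = \nabla_C\!\left(R^\nabla(A,B)v^-\right) - R^\nabla(A,B)\!\left(\nabla_C v^-\right) - R^\nabla(\overline\nabla_C A,B)v^- - R^\nabla(A,\overline\nabla_C B)v^-,$$
which is the only input specific to the meaning of ``$D$''; everything else is computation in the trivialization.

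First I would rewrite each curvature term using $R^\nabla(A,B)v^- = \Omega(A,B)\cdot v^-$ together with the connection form identity $\nabla_C v^- = \omega(C)\cdot v^-$. The two terms involving $\overline\nabla_C A$ and $\overline\nabla_C B$ become $\Omega(\overline\nabla_C A,B)\cdot v^-$ and $\Omega(A,\overline\nabla_C B)\cdot v^-$ directly. For the first term I would apply the Leibniz rule of Lemma~\ref{L:lieb} with $M=\Omega(A,B)$, giving
$$\nabla_C\!\left(\Omega(A,B)\cdot v^-\right) = \left(C(\Omega(A,B)) + \omega(C)\cdot\Omega(A,B)\right)\cdot v^-.$$
The term $R^\nabla(A,B)(\nabla_C v^-)$ is tensorial in its section argument, so applying the endomorphism $\Omega(A,B)$ to $\omega(C)\cdot v^-$ yields $\left(\Omega(A,B)\cdot\omega(C)\right)\cdot v^-$.

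Collecting these four contributions, the coefficient of $v^-$ becomes
$$C(\Omega(A,B)) - \Omega(\overline\nabla_C A,B) - \Omega(A,\overline\nabla_C B) + \omega(C)\cdot\Omega(A,B) - \Omega(A,B)\cdot\omega(C).$$
The first three terms are exactly $(D_C\Omega)(A,B)$, the covariant derivative of the matrix-valued two-form $\Omega$ with $\overline\nabla$ acting on the tangential slots and the matrix entries differentiated as ordinary functions; the last two terms are the commutator $[\omega(C),\Omega(A,B)]$. Comparing with the definition of $\Theta$ in Equation~\ref{E:defTheta} finishes the proof.

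I expect the only real subtlety to be bookkeeping about which connection acts on which slot. The endomorphism slot of $R^\nabla$ is differentiated with the genuine connection $\nabla$, producing via Lemma~\ref{L:lieb} the terms $\omega(C)\cdot\Omega(A,B)$ and $-\Omega(A,B)\cdot\omega(C)$, whereas $\Omega$ itself is treated as a matrix-valued form whose $D_C$ ignores the bundle and differentiates entries only. The commutator term in $\Theta$ is precisely the discrepancy between these two notions of differentiation, and the heart of the verification is confirming that it emerges with the correct sign.
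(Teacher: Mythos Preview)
Your proof is correct and follows essentially the same approach as the paper's: both expand the covariant derivative of $R^\nabla$, invoke Lemma~\ref{L:lieb} to differentiate $\Omega(A,B)\cdot v^-$, and identify the commutator $[\omega(C),\Omega(A,B)]$ as the difference between differentiating with $\nabla$ versus differentiating matrix entries. The only cosmetic difference is that the paper chooses $\overline\nabla$-parallel extensions of $A,B$ along a curve so that the two $\overline\nabla_C$-correction terms vanish and $C(\Omega(A_t,B_t))$ directly equals $(D_C\Omega)(A,B)$, whereas you keep all four terms and regroup them at the end.
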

\begin{proof} Let $\gamma(t)$ denote a path in $S^n$ with $\gamma(0)=p$ and $\gamma'(0)=C(p)$. Let $V_t=v^-(\gamma(t))$ and let $A_t,B_t$ denote the parallel extensions of $A,B$ along $\gamma$. Using the prime symbol to denote the $t=0$ value of the $\nabla$-covariant derivatives of a section along $\gamma$, we have at $p$:
\begin{eqnarray*}
(D_CR^\nabla)(A,B)V & = & (R^\nabla(A_t,B_t)V_t)' - R^\nabla(A,B)(V') \\
   & = & (\Omega(A_t,B_t)\cdot V_t)' - R^\nabla(A,B)(\omega(C)\cdot V) \\
   & = & (D_C\Omega)(A,B)\cdot V + \omega(C)\cdot\Omega(A,B)\cdot V -\Omega(A,B)\cdot\omega(C)\cdot V
\end{eqnarray*}
In the last equality, the fact that $(\Omega(A_t,B_t)\cdot V_t)'= (D_C\Omega)(A,B)\cdot V + \omega(C)\cdot \Omega(A,B)\cdot V$ follows from Lemma~\ref{L:lieb}.
\end{proof}

%%------------------------------
\section{Radially symmetric connections}
In this section, we prove the main theorem.  In our local coordinates, the hypothesis becomes: $\Theta(\ha,\ha,\hx)_p=0$ for all $p\in\tU$ and $X\in\mh$, or more concisely that $\Theta(\ha,\ha,\cdot)=0$ on $\tU$.  We first show that this hypothesis uniquely determine the connection in terms of $\mC$.
\begin{lem}\label{L:step1} If $\Theta(\ha,\ha,\cdot)=0$ on $\tU$, then for every $p=((\cos\alpha)a,\sin\alpha)\in \tU$ and every $X\in \mh$,
\begin{equation}\label{hereitis}\mF_p(X) = \underbrace{-\frac 12(\sin\alpha +1)}_{\text{denoted }g(\alpha)}\mQ_a(X).\end{equation}
\end{lem}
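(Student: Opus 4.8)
The plan is to fix $a\in S^{n-1}$ and $X\in\mh$, restrict attention to the longitude geodesic $\gamma(\alpha)=((\cos\alpha)a,\sin\alpha)$, and show that the radial symmetry hypothesis forces a second-order linear ODE on the $so(k)$-valued function $f(\alpha)=\mF_{\gamma(\alpha)}(X)$ whose solution is rigidly determined by the boundary behavior at the two poles. First I would unwind $\Theta(\ha,\ha,\hx)=0$. Since $\omega(\ha)=0$, the commutator term in the definition of $\Theta$ (Equation~\ref{E:defTheta}) drops out, leaving $\Theta(\ha,\ha,\hx)=(D_{\ha}\Omega)(\ha,\hx)$. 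Expanding this tensorial derivative as
\[(D_{\ha}\Omega)(\ha,\hx)=\ha\bigl(\Omega(\ha,\hx)\bigr)-\Omega(\overline\nabla_{\ha}\ha,\hx)-\Omega(\ha,\overline\nabla_{\ha}\hx)\]
and inserting the Levi-Civita identities $\overline\nabla_{\ha}\ha=0$ and $\overline\nabla_{\ha}\hx=-(\tan\alpha)\hx$ from Equation~\ref{hello}, I obtain $\Theta(\ha,\ha,\hx)=\ha\bigl(\Omega(\ha,\hx)\bigr)+(\tan\alpha)\,\Omega(\ha,\hx)$.

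Next I would substitute the curvature formula $\Omega(\ha,\hx)=(\ha\mF)(X)$ from Lemma~\ref{L:contocurv}. Because $\ha$ is exactly the velocity field $\gamma'(\alpha)$ of the longitude, differentiation in the $\ha$ direction along $\gamma$ coincides with $d/d\alpha$, so $\Omega(\ha,\hx)=f'(\alpha)$ and $\ha\bigl(\Omega(\ha,\hx)\bigr)=f''(\alpha)$. The hypothesis therefore collapses to the linear ODE
\[f''(\alpha)+(\tan\alpha)\,f'(\alpha)=0,\]
valid for each fixed $a$ and $X$. Writing $h=f'$ and integrating $h'=-(\tan\alpha)h$ gives $h(\alpha)=C_1\cos\alpha$ for some constant matrix $C_1\in so(k)$ depending on $a$ and $X$; a further integration yields the general solution $f(\alpha)=C_1\sin\alpha+C_2$ with $C_2\in so(k)$ also constant.

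Finally I would pin down $C_1$ and $C_2$ using the two poles. At $\alpha=-\tfrac{\pi}{2}$ the geodesic reaches the center $p_0=p^-$ of the trivializing chart $U=U^-$; there $\omega$ is smooth and the Killing field $\hx$ vanishes (since $\hx(p)=((\cos\alpha)\tilde{X}(a),0)$), so $f(-\tfrac{\pi}{2})=\omega_{p_0}(\hx(p_0))=0$, which forces $C_2=C_1$. At $\alpha=\tfrac{\pi}{2}$ the geodesic approaches the point $p^+$ missing from $U^-$, and Proposition~\ref{Mags} gives $f(\tfrac{\pi}{2})=-\mQ_a(X)$, which forces $C_1+C_2=-\mQ_a(X)$. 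Solving these yields $C_1=C_2=-\tfrac12\mQ_a(X)$, hence $f(\alpha)=-\tfrac12(\sin\alpha+1)\mQ_a(X)$, which is exactly Equation~\ref{hereitis}. I expect the main obstacle to be the bookkeeping in the first step: one must be careful that $D_{\ha}\Omega$ denotes the tensorial derivative using $\overline\nabla$ on the two tangent slots, with the $so(k)$-value contribution already separated into the commutator term of Equation~\ref{E:defTheta}, and one must correctly recognize the two boundary conditions — smoothness of the connection at the center $p_0$ and the asymptotic behavior at $p^+$ from Proposition~\ref{Mags} — as precisely the data that determine $\mF$ from $\mC$.
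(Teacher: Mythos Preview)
Your argument is correct and follows essentially the same route as the paper: derive the ODE $f''+(\tan\alpha)f'=0$ from $\Theta(\ha,\ha,\hx)=0$ via Lemma~\ref{L:contocurv} and Equation~\ref{hello}, then determine the solution from the endpoint behavior at $\alpha=\pm\pi/2$. Your version is in fact slightly cleaner, since you write out the general solution $f(\alpha)=C_1\sin\alpha+C_2$ explicitly and use only the two value conditions; the paper additionally records $\lim_{\alpha\to-\pi/2}\mF'(\alpha)=0$, but that condition is automatic for any solution (since $f'=C_1\cos\alpha$) and contributes nothing extra.
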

\begin{proof}
Using Lemma~\ref{L:contocurv} and Equation~\ref{hello}, we compute:
\begin{align*}
\Theta(\ha,\ha,\hx)
   & = (D_{\ha}\Omega)(\ha,\hx) + [\underbrace{\omega(\ha)}_0,\Omega(\ha,\hx)]\\
   & = \ha(\Omega(\ha,\hx)) - \Omega(\underbrace{\overline\nabla_{\ha}\ha}_0,\hx) - \Omega(\ha,\overline\nabla_{\ha}\hx)
     = (\ha\ha \mF)(X) + (\tan\alpha)\Omega(\ha,\hx) \\
   & = (\ha\ha \mF)(X) + (\tan\alpha)(\ha \mF)(X).
\end{align*}

Now fix $a\in S^{n-1}$ and consider the path $\gamma(\alpha)=((\cos\alpha) a,\sin\alpha)$ in $S^n$.  Denote $\mF_{\gamma(\alpha)}$ simply as $\mF(\alpha)$; that is, along $\gamma$ we are considering $\mF$ as a one-parameter family of linear maps from $\mh$ to $ so(k)$ with parameter $\alpha\in(-\pi/2,\pi/2)$.  The previous equation becomes:
\begin{equation}\label{E:spring}\mF''(\alpha)=-(\tan\alpha)\mF'(\alpha).\end{equation}
At the endpoints of the domain, we have for all $X\in \mh$:
$$\lim_{\alpha\ra-\pi/2}\mF(\alpha)= 0,\quad \lim_{\alpha\ra-\pi/2}\mF'(\alpha) = 0,
\quad \lim_{\alpha\ra\pi/2}\mF(\alpha)(X) =  -\mQ_a(X).$$
The first limit follows from $\lim_{\alpha\ra-\pi/2}|\hx|=0$.  The second limit comes from the fact that Equation~\ref{E:spring} remains bounded.  The third limit comes from Proposition~\ref{Mags}.  The proposition gives the unique solution to the differential equation with these initial and ending conditions.
\end{proof}

The previous lemma says that a connection satisfying $\Theta(\ha,\ha,\cdot)=0$ is determined in a canonical way by the clutching map.  But not every smooth map $\mC:S^{n-1}\ra SO(k)$ can be plugged into Equation~\ref{hereitis} to determine a smooth connection.  As the next lemma shows, smoothness of the connection at the poles $\pm p_0$ implies restrictions on $\mC$.

\begin{lem}\label{L:step2} If $\Theta(\ha,\ha,\cdot)=0$ on $\tU$, so that $\mF=g\cdot\mQ$ as in Equation~\ref{hereitis}, then for every $a\in S^{n-1}$ and every $X,Y\in\mathfrak{p}_a$, we have:
\begin{equation}\label{E:restriction}2(\tilde{X}Q)_a(Y)=-2(\tilde{Y}Q)_a(X)=[\mQ_a(X),\mQ_a(Y)] - \mQ_a([X,Y]).\end{equation}
\end{lem}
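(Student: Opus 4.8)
The plan is to exploit the fact that although $\omega$ and the curvature are expressed on $\tU$ through the fields $\hx$, which degenerate at the poles, the curvature tensor is genuinely smooth at $p_0$ (the center of $U=U^-$, where the frame $v^-$ is smooth); reading this smoothness along all longitudes at once will force the algebraic identity. First I would observe that Equation~\ref{E:restriction} is equivalent to the conjunction of an \emph{antisymmetric} identity
$$(\tilde{X}\mQ)_a(Y) - (\tilde{Y}\mQ)_a(X) = [\mQ_a(X),\mQ_a(Y)] - \mQ_a([X,Y])$$
and a \emph{symmetric} identity $(\tilde{X}\mQ)_a(Y) = -(\tilde{Y}\mQ)_a(X)$. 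The antisymmetric one is automatic: since $\mQ_a(X) = d\mC_a(\tilde{X}(a))\cdot\mC(a)^{-1}$ is the pullback by $\tilde X$ of the right logarithmic derivative of $\mC$, it is exactly the Maurer--Cartan structure equation for $\mC$ (using $[\tilde X,\tilde Y] = -\widetilde{[X,Y]}$), valid for every smooth clutching map. It is also visible in Lemma~\ref{L:contocurv}: substituting $\mF = g\mQ$ gives $\Omega(\hx,\hy) = g\big[(\tilde X\mQ)_a(Y) - (\tilde Y\mQ)_a(X) + \mQ_a([X,Y])\big] + g^2[\mQ_a(X),\mQ_a(Y)]$, and boundedness of the normalized curvature at $-p_0$ forces the bracketed factor to equal $[\mQ_a(X),\mQ_a(Y)]$ --- the same identity. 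Thus the real content is the symmetric identity, for which I would use the other curvature component.

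Substituting $\mF=g\mQ$ into Lemma~\ref{L:contocurv} and using $g'(\alpha)=-\tfrac12\cos\alpha$ gives $\Omega(\ha,\hx) = -\tfrac12\cos\alpha\,\mQ_a(X)$. Along the longitude through $a$ the vector $\hx/\cos\alpha$ is the constant ambient vector $(\tilde X(a),0)$, while $\ha\ra(a,0)$ as $\alpha\ra-\pi/2$; since the curvature tensor is smooth at $p_0$, passing to the limit yields the pole identity
$$\Omega_{p_0}\big((a,0),(\tilde X(a),0)\big) = -\tfrac12\,\mQ_a(X)\qquad\text{for all }a\in S^{n-1},\ X\in\mh.$$
Reading this as $\mQ_{a'}(X) = -2\,\Omega_{p_0}((a',0),(\tilde X(a'),0))$ for varying $a'$ and differentiating once along $\tilde Y$ and once along $\tilde X$ --- using that $\Omega_{p_0}$ is a \emph{fixed} antisymmetric bilinear form and that the ambient derivative of the Killing field is $\nabla_{\tilde Y}\tilde X = XYa$ --- I would add the two resulting equations. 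The two pure-curvature terms cancel by antisymmetry, and the symmetrization of the remaining terms produces
$$(\tilde X\mQ)_a(Y) + (\tilde Y\mQ)_a(X) = -2\,\Omega_{p_0}\big((a,0),((XY+YX)a,0)\big).$$

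It remains to show the right-hand side vanishes, and this is the heart of the matter: I must prove that $(XY+YX)a$ is parallel to $a$ for $X,Y\in\mpp_a$, after which antisymmetry of $\Omega_{p_0}$ kills the right-hand side and establishes the symmetric identity, completing the lemma. Equivalently, the symmetrized covariant derivative $\nabla_{\tilde X}\tilde Y+\nabla_{\tilde Y}\tilde X$ of the Killing fields is purely normal to $S^{n-1}$ at $a$; its tangential part equals $-\operatorname{grad}\langle\tilde X,\tilde Y\rangle$, so it suffices that $a$ be a critical point of $\langle\tilde X,\tilde Y\rangle$ when $X,Y\in\mpp_a$, which by polarization reduces to $X^2a\parallel a$, i.e.\ to the statement that the orbit $t\mapsto\exp(tX)\cdot a$ is a great circle for $X\in\mpp_a$. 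I expect this geodesic-orbit property of the round sphere to be the main obstacle. It is transparent for $H=SO(n)$, where $S^{n-1}$ is symmetric and a direct matrix computation gives $(XY+YX)a=-2\langle \tilde X(a),\tilde Y(a)\rangle\,a$; since proving the theorem only requires \emph{some} transitive $H$, one may simply take $H=SO(n)$, while for a general transitive $H$ one must additionally invoke that the round metric is naturally reductive for the decomposition $\mpp_a=\mk_a^{\perp}$. The secondary technical point, to be handled carefully, is the limit at $p_0$: the frame there is smooth even though the fields $\hx$ degenerate, which is exactly what legitimizes the normalized limits above.
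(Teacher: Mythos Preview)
Your argument is correct, but it follows a genuinely different route from the paper's.  The paper computes the full tensor $\Theta(\hx,\hy,\ha)$ along a longitude, normalizes by $\cos^2\alpha$, and then reads off the two relations $W_2=-W_3$ and $W_1+2W_2=0$ from boundedness at \emph{both} poles $p_0$ and $-p_0$; this is a single long calculation that feeds directly into the subsequent proof of Proposition~\ref{ups}.  By contrast, you first recognize that the antisymmetric half of Equation~\ref{E:restriction} is the Maurer--Cartan identity for the logarithmic derivative $\mQ$ of the clutching map and hence holds for \emph{any} smooth $\mC$, with no hypothesis needed.  You then isolate the symmetric half and derive it from the curvature form alone (not its covariant derivative) via the pole identity $\Omega_{p_0}((a,0),(\tilde X(a),0))=-\tfrac12\mQ_a(X)$, differentiated in $a$ and symmetrized, using only smoothness at the single pole $p_0$; the crucial algebraic input is that for $H=SO(n)$ and $X,Y\in\mpp_a$ one has $(XY+YX)a\parallel a$, which you verify explicitly.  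Your approach is more conceptual and more economical---it avoids the $\Theta$-computation entirely and explains why half of the lemma is ``for free''---while the paper's approach has the advantage of being a uniform calculation in the same currency ($\Theta$) used throughout Section~5.  Your remark that one may simply take $H=SO(n)$ is well placed: the paper's setup allows a general transitive $H$, but the theorem only requires one choice, and the geodesic-orbit fact you need is immediate for $SO(n)$.
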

Notice that Equation~\ref{E:restriction} is the same as Equation~\ref{E:tt}, so it's already been verified for the examples of the previous section.
\begin{proof}
Fix $a\in S^{n-1}$ and unit-length $X,Y\in\mathfrak{p}_a$. Along the path $p(\alpha)=((\cos\alpha)a,\sin\alpha)$, consider the following function of $\alpha$, in which everything is assumed to be evaluated at $p(\alpha)$ or $a$ as appropriate:
\begin{align*}
\Theta(\hx,\hy,\ha)_{p(\alpha)}
  &= \hx(\Omega(\hy,\ha)) + [\omega(\hx),\Omega(\hy,\ha)] - \Omega(\overline{\nabla}_{\hx}\hy,\ha)
     - \Omega(\hy,\overline{\nabla}_{\hx}\ha) \\
  &= -g'(\alpha)(\tilde X \mQ)(Y)+ [g(\alpha)\mQ(X),-g'(\alpha)\mQ(Y)]
     +\frac 12\Omega(\widehat{[X,Y]},\ha)+(\tan\alpha)\Omega(\hy,\hx)\\
  & = -g'(\alpha)(\tilde X \mQ)(Y)- g(\alpha)g'(\alpha)[\mQ(X),\mQ(Y)]-\frac 12 g'(\alpha)\mQ([X,Y])\\
  &\quad -\tan(\alpha)\left( g(\alpha)(\tilde X \mQ)(Y)-g(\alpha)(\tilde Y \mQ)(X)+g(\alpha)\mQ([X,Y])
      +g(\alpha)^2[\mQ(X),\mQ(Y)]\right).
\end{align*}
Since $|\hx|=|\hy|=\cos\alpha$ at $p$, this yields:
\begin{equation}\label{E:cello} T(\alpha)=\Theta\left(\frac{\hx}{|\hx|},\frac{\hy}{|\hy|},\ha\right)_{p(\alpha)} = C_1 W_1 + C_2 W_2 + C_3 W_3,\end{equation}
where:
\begin{align*}
C_1  & =  \frac{-\cos^2\alpha + 2\sin(\alpha) + 2}{4\cos^3(\alpha)} ,
& C_2  & =  \frac{\sin\alpha+1}{2\cos^3(\alpha)},
& C_3  & =  \frac{-\sin\alpha\cdot(\sin\alpha + 1)}{2\cos^3\alpha},\\
W_1 & = \mQ([X,Y]) - [\mQ(X),\mQ(Y)],
& W_2 & = (\tilde{X}\mQ)(Y),
& W_3 & =(\tilde{Y}\mQ)(X).\\
\end{align*}

The smoothness of $\nabla$ at the poles $\pm p_0$ implies that $|T(\alpha)|$ remains bounded as $\alpha\ra\pm\pi/2$, which will imply relationships between $W_1$, $W_2$ and $W_3$.  More specifically, first notice that as $\lim_{\alpha\ra-\pi/2^+} C_1=0$ while $\lim_{\alpha\ra-\pi/2^+} C_2=\lim_{\alpha\ra-\pi/2^+} C_3=\infty$.
This implies that $W_2$ is parallel to $W_3$.  In fact, since $\lim_{\alpha\ra-\pi/2^+}\frac{C_2}{C_3}=1$, we learn that $W_2=-W_3$, so Equation~\ref{E:cello} becomes:
$$T(\alpha)=C_1W_1 + (C_2-C_3) W_2 \underbrace{= C_1(W_1+2W_2)}_{\text{because }2C_1=C_2-C_3}.$$
Since $\lim_{\alpha\ra \pi/2^-} C_1=\infty$, we learn that $W_1+2W_2=0$.  In summary, we have learned that $W_2=-W_3$ and $W_1+2W_2=0$, which is exactly Equation~\ref{E:restriction}.
\end{proof}

\begin{prop}\label{ups} If $\Theta(\ha,\ha,\cdot)=0$ on $\tU$, then $\nabla$ is parallel.
\end{prop}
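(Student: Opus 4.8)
The plan is to apply Lemma~\ref{L:dcurv}, which reduces parallelism to the vanishing of the tensor $\Theta$. Since $\ha$ together with the Killing fields $\hx$ (for $X\in\mpp_a$) span $T_pS^n$ at every $p\in\tU$, and since $\Theta(C,A,B)$ is antisymmetric in the pair $(A,B)$, it suffices to check that $\Theta$ vanishes on all triples drawn from this frame. I would organize the verification by the number of $\ha$-entries, first disposing of every component that contains at least one $\ha$, and then treating the purely ``latitudinal'' component $\Theta(\hx,\hy,\hz)$.

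For the first group I would point out that the computation already carried out in the proof of Lemma~\ref{L:step2} yields more than Equation~\ref{E:restriction}: once $W_2=-W_3$ and $W_1+2W_2=0$ are known, substituting these back into $T(\alpha)=C_1W_1+C_2W_2+C_3W_3$ and using $2C_1=C_2-C_3$ gives $T(\alpha)\equiv 0$, that is, $\Theta(\hx,\hy,\ha)=0$ throughout $\tU$. Combining this with the hypothesis $\Theta(\ha,\ha,\cdot)=0$, the antisymmetry in the last two slots, and the second Bianchi identity (the vanishing of the cyclic sum of $\Theta$ over its three arguments), every component of $\Theta$ having at least one $\ha$-entry vanishes.

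It then remains to treat $\Theta(\hx,\hy,\hz)$ for $X,Y,Z\in\mpp_a$. First I would record the clean curvature formula that follows from Lemma~\ref{L:contocurv}, the identity $\mF=g\,\mQ$ of Lemma~\ref{L:step1}, and Equation~\ref{E:restriction}, namely $\Omega(\hx,\hy)=-\tfrac14\cos^2\alpha\,[\mQ_a(X),\mQ_a(Y)]$ for $X,Y\in\mpp_a$ (here one uses $\mQ_a([X,Y])=0$ since $[X,Y]\in\mk_a$). Expanding
\[
\Theta(\hx,\hy,\hz)=\hx\bigl(\Omega(\hy,\hz)\bigr)-\Omega(\overline\nabla_{\hx}\hy,\hz)-\Omega(\hy,\overline\nabla_{\hx}\hz)+[\omega(\hx),\Omega(\hy,\hz)],
\]
I would evaluate the shape-operator terms via Equation~\ref{hello2}, noting that $\widehat{[X,Y]}_p=0$ because $[X,Y]\in\mk_a$, so that $\overline\nabla_{\hx}\hy|_p=(\tan\alpha)\langle\hx,\hy\rangle\,\ha$ and $\Omega(\ha,\hz)=g'\mQ_a(Z)$. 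Since along $\hx$ the footpoint $a$ moves by the flow of $\tilde X$ while $\alpha$ stays fixed, the term $\hx(\Omega(\hy,\hz))$ is computed from $\tfrac{d}{dt}\mQ_{a(t)}$, which Equation~\ref{E:restriction} identifies with a commutator; after the Jacobi identity all $\alpha$-dependence factors out and one is left with $\Theta(\hx,\hy,\hz)=\tfrac18\sin\alpha\cos^2\alpha\,V$, where $V=[\mQ_a(X),[\mQ_a(Y),\mQ_a(Z)]]-4\langle X,Z\rangle\mQ_a(Y)+4\langle X,Y\rangle\mQ_a(Z)\in so(k)$ is independent of $\alpha$.

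Finally I would exploit smoothness at the pole exactly as in Lemma~\ref{L:step2}. Along the longitude through $a$, evaluating $\Theta$ on the unit fields $\hx/\cos\alpha$, $\hy/\cos\alpha$, $\hz/\cos\alpha$ produces $\tfrac18\tan\alpha\,V$. Because $\nabla$ is smooth at $-p_0$, the tensor $\Theta$ is smooth and therefore bounded on unit vectors near $-p_0$; since $\tan\alpha\to\infty$ as $\alpha\to\pi/2$, this forces $V=0$ and hence $\Theta(\hx,\hy,\hz)=0$. I expect the main obstacle to be the third paragraph: carrying out the latitudinal computation carefully enough to see the cubic bracket terms (coming from differentiating $\Omega$ and from $[\omega(\hx),\Omega(\hy,\hz)]$) merge with the ``trace'' terms produced by the normal component of $\overline\nabla_{\hx}\hy$ into the single expression $\tfrac18\sin\alpha\cos^2\alpha\,V$, after which the boundedness argument closes the proof.
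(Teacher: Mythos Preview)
Your proposal is correct and follows essentially the same route as the paper: both extract $\Theta(\hx,\hy,\ha)=0$ from the computation in the proof of Lemma~\ref{L:step2}, both reduce the purely latitudinal component to $\tfrac18\sin\alpha\cos^2\alpha$ times an $\alpha$-independent element of $so(k)$, and both use boundedness of $\Theta$ on unit vectors near the pole to force that element to vanish (the paper records this as Equation~\ref{E:restriction2}). The only differences are cosmetic: you get $\Theta(\ha,\hx,\hy)=0$ from the second Bianchi identity, whereas the paper checks it directly via the identity $2gg'+g'+2(\tan\alpha)(g^2+g)=0$; and your latitudinal computation tacitly uses $[\mpp_a,\mpp_a]\subset\mk_a$ (valid once you fix $H=SO(n)$, which you should state), whereas the paper keeps the $\widehat{[Z,X]}$ and $\mQ_a([Z,X])$ terms and shows they cancel against the corresponding pieces of $\hz[\mQ(X),\mQ(Y)]$.
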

\begin{proof}

Write $\mF=g\cdot\mQ$ as in Lemma~\ref{L:step1}.  Fix $p=((\cos\alpha)a,\sin\alpha)\in\tU$ and fix $X,Y,Z\in\mathfrak{p}_a$.
Equation~\ref{E:restriction} and Lemma~\ref{L:contocurv} now give that:
\begin{align*}
\Omega(\ha,\hx)_p & = g'\mQ_a(X),\\
\Omega(\hx,\hy)_p & = g(\tilde{X}\mQ)_a(Y) - g(\tilde{Y}\mQ)_a(X) + g \mQ_a([X,Y]) + g^2[\mQ_a(X),\mQ_a(Y)]\\
                & = (g^2+g)[\mQ_a(X),\mQ_a(Y)],
\end{align*}
where expressions involving $g$ are assumed to be evaluated at $\alpha$.

We need to show that $\Theta=0$.  Since $\{p,X,Y,Z\}$ were arbitrarily chosen, this is equivalent to showing:
\begin{equation}\label{round}\Theta(\ha,\ha,\hx)_p = \Theta(\ha,\hx,\hy)_p=\Theta(\hx,\hy,\ha)_p = \Theta(\hx,\hy,\hz)_p = 0.\end{equation}
First, $\Theta(\ha,\ha,\hx)=0$ by hypothesis.  Next:
\begin{align*}
\Theta(\ha,\hx,\hy)_p &= \ha\Omega(\hx,\hy)_p - \Omega(\overline{\nabla}_{\ha}\hx,\hy)_p
   - \Omega(\hx,\overline{\nabla}_{\ha}\hy)_p +[\underbrace{\omega(\ha)_p}_{0},\Omega(\hx,\hy)_p]\\
  & = (g^2+g)'[\mQ_a(X),\mQ_a(Y)] + 2(\tan\alpha)\Omega(\hx,\hy)_p \\
  & = \underbrace{\left(2gg'+g'+2(\tan\alpha)(g^2+g)\right)}_0[\mQ_a(X),\mQ_a(Y)]=0.
\end{align*}
The proof Lemma~\ref{L:step2} established that $\Theta(\hx,\hy,\ha)_p=0$.  So to verify Equation~\ref{round}, it remains to prove that the following vanishes:
$$\Theta(\hz,\hx,\hy)_p = \left(\hz\Omega(\hx,\hy)\right)_p - \Omega(\overline{\nabla}_{\hz}\hx,\hy)_p
                        - \Omega(\hx,\overline{\nabla}_{\hz}\hy)_p+[\omega(\hz)_p,\Omega(\hx,\hy)_p].$$
For this, we must first use Equation~\ref{E:restriction} to simplify the expression $\hz[\mQ(X),\mQ(Y)]$:
\begin{align*}
2\hz[\mQ(X),\mQ(Y)] &= 2[(\tilde{Z}\mQ)(X),\mQ(Y)] + 2[\mQ(X),(\tilde{Z}\mQ)(Y)] \\
   & =  [[\mQ(Z),\mQ(X)]-\mQ([Z,X]),\mQ(Y)] + [\mQ(X),[\mQ(Z),\mQ(Y)] - \mQ([Z,Y])]\\
   & = -[\mQ([Z,X]),\mQ(Y)] -[\mQ(X),\mQ([Z,Y])]\\
   & \qquad   +\Big(\underbrace{[[\mQ(Z),\mQ(X)],\mQ(Y)] + [\mQ(X),[\mQ(Z),\mQ(Y)]]}_{=-[[\mQ(X),\mQ(Y)],\mQ(Z)]\text{ by Jacobi}} \Big)\\
   & = -[\mQ([Z,X]),\mQ(Y)] -[\mQ(X),\mQ([Z,Y])] - [[\mQ(X),\mQ(Y)],\mQ(Z)].
\end{align*}
Next observe that $\lb\hz,\hy\rb_p=\cos^2(\alpha)\lb Z,Y\rb$ because the vectors lie in $\mathfrak{p}_a$ by hypothesis, so:
\begin{align*}
\Omega(\overline{\nabla}_{\hz}\hx,\hy)
    & = \Omega\left(-\frac 12\widehat{[Z,X]}+(\tan\alpha)\lb \hz,\hx\rb_p\hat\alpha,\hy\right) \\
    & = -\frac 12(g^2+g)[\mQ([Z,X]),\mQ(Y)]+g'(\cos\alpha)(\sin\alpha)\lb Z,X\rb\mQ(Y),
\end{align*}
And similarly for $\Omega(\hx,\overline{\nabla}_{\hz}\hy)$, so that:
\begin{align*}
  -\Omega(\overline{\nabla}_{\hz}\hx,\hy)-\Omega(\hx,\overline{\nabla}_{\hz}\hy)
  & = \frac 12(g^2+g)\left([\mQ([Z,X]),\mQ(Y)] + [\mQ(X),\mQ([Z,Y])]\right) \\
  &\qquad -g'(\cos\alpha)(\sin\alpha)\left(\lb Z,X\rb \mQ(Y) - \lb Z,Y\rb \mQ(X) \right).
\end{align*}
Finally,
$$[\omega(\hz),\Omega(\hx,\hy)] = g(g^2+g)[\mQ(Z),[\mQ(X),\mQ(Y)]].$$
These expressions give the following simplification:
\begin{align*}
\Theta(\hz,\hx,\hy) &= \underbrace{\hz\Omega(\hx,\hy)}_{(g^2+g)\tilde{Z}[\mQ(X),\mQ(Y)]} +[\omega(\hz),\Omega(\hx,\hy)] - \Omega(\overline{\nabla}_{\hz}\hx,\hy)
                        - \Omega(\hx,\overline{\nabla}_{\hz}\hy)\\
                   &=  -(g^2+g)(g+1/2)[[\mQ(X),\mQ(Y)],\mQ(Z)]\\
                    & \qquad  -g'(\cos\alpha)(\sin\alpha)\left(\lb Z,X\rb \mQ(Y) - \lb Z,Y\rb \mQ(X) \right)\\
                    & = \frac 18\cos^2(\alpha)\sin(\alpha)
                        \Big( 4\lb Z,X\rb \mQ(Y) - 4 \lb Z,Y\rb \mQ(X) - [[\mQ(X),\mQ(Y)],\mQ(Z)]  \Big).
\end{align*}
Like in the proof Lemma~\ref{L:step2}, the smoothness of $\nabla$ at the poles $\pm p_0$ implies that the function $T(\alpha) = \Theta\left(\frac{\hz}{|\hz|},\frac{\hx}{|\hx|},\frac{\hy}{|\hy|}\right)$ has bounded norm as $\alpha\ra\pm\pi/2$, which implies that:
\begin{equation}\label{E:restriction2} [[\mQ(X),\mQ(Y)],\mQ(Z)] = 4\lb Z,X\rb \mQ(Y) - 4 \lb Z,Y\rb \mQ(X).\end{equation}
Thus, $\Theta(\hz,\hx,\hy)=0$ as desired.

This completes the proof, although it's worth mentioning that Equation~\ref{E:restriction2} is true for the associated bundles described in Proposition~\ref{P:e2}, simply because $\varphi$ is a Lie algebra homomorphism and $H/K=S^{n-1}$ is a symmetric space with constant curvature $1$.
\end{proof}

Given a rank $k$ vector bundle over $S^n$ and an explicit map $\mC:S^{n-1}\ra SO(k)$ with the correct homotopy type to represent the clutching map of the bundle, one might wish that a connection in the bundle could be canonically determined from $\mC$, but the above proof suggests not.  Equation~\ref{hereitis} prescribes a connection in terms of $\mC$, and is the only prescription that matches the natural examples, but yet the proof shows that the resulting connection is smooth only for the known examples.

In contrast, an explicit representative of the homotopy type of the \emph{classifying} map is known to canonically determine a connection (see~\cite{NR}), but is often more difficult to work with than the clutching map.  For example, the parallel and radial symmetry hypotheses have been translated into geometric conditions on the classifying map (see~\cite{T2}), but this translation probably doesn't yield any natural proof the the main theorem of this paper.

%%-------------------- BIBLIOGRAPHY------------------------

\bibliographystyle{amsplain}

\end{document}